\documentclass[reqno]{amsart} 


\usepackage[latin1]{inputenc}

\usepackage{enumitem}

\usepackage{amsmath,amssymb,bbm}
\usepackage{color,epsfig,psfrag,url}

\usepackage[colorlinks,bookmarks]{hyperref}



\let\version\today

\let\subset\subseteq
\let\eps\varepsilon

\def\rom{\rm (\roman{*})}
\def\abc{\rm (\alph{*})}
\def\arab{\rm (\arabic{*})}


\newtheorem{theorem}                   {Theorem}
\newtheorem{lemma}           [theorem] {Lemma}   
\newtheorem{corollary}       [theorem] {Corollary}   
\newtheorem{proposition}     [theorem] {Proposition}  
\newtheorem{claim}           [theorem] {Claim}

\newtheorem{definition}      [theorem] {Definition} 
 
 \theoremstyle{remark}


\newlength{\itemleftmargin}
\setlength{\itemleftmargin}{.7cm}


\newcommand{\makeversion}{
  \let\oldfootnote\thefootnote %
  \renewcommand{\thefootnote}{} %
  \footnote{\version} %
  \let\thefootnote\oldfootnote %
}

\newcommand{\comment}[1]{}


\newcommand{\field}[1]{\mathbb{#1}}
\newcommand{\N}{\field{N}}

\newcommand{\cC}{\mbox{${ \mathcal C }$}}

\newcommand{\cG}{\mbox{${ \mathcal G }$}}

\newcommand{\Then}{\Rightarrow}
\newcommand{\bigO}{\mathcal{O}}

\newcommand{\deff}{\mathrel{\mathop:}=}

\newcommand{\bdd}[1][\eps]{\operatorname{b}_{#1}}

\DeclareMathOperator{\trw}{tw} 
\DeclareMathOperator{\bdw}{bw} 
\DeclareMathOperator{\s}{s} 
\DeclareMathOperator{\dist}{dist}

\DeclareMathOperator{\dcup}{\dot\cup} 


 \title[Bandwidth, expansion, treewidth, separators, and universality]{
	Bandwidth, expansion, treewidth, separators, and universality for bounded
	degree graphs}

  \author{Julia B\"ottcher} 
  \address{Zentrum Mathematik, Technische Universit\"at M\"unchen, 
    Boltzmannstra\ss{}e~3, D-85747 Garching bei M\"unchen, Germany} 
  \email{boettche@ma.tum.de}

  \author{Klaas P. Pruessmann}
  \address{Institute for Biomedical Engineering, 
    University and ETH Zurich, 
    Gloriastr. 35, 8092, Z\"urich, Switzerland}
  \email{pruessmann@biomed.ee.ethz.ch}

  \author{Anusch Taraz } 
  \address{Zentrum Mathematik, Technische Universit\"at M\"unchen, 
    Boltzmannstra\ss{}e~3, D-85747 Garching bei M\"unchen, Germany} 
  \email{taraz@ma.tum.de}

  \author{Andreas W\"urfl }
  \address{Zentrum Mathematik, Technische Universit\"at M\"unchen, 
    Boltzmannstra\ss{}e~3, D-85747 Garching bei M\"unchen, Germany} 
  \email{wuerfl@ma.tum.de}


\thanks{ The first and third author were partially supported by DFG grant TA
309/2-1. The first author was partially supported by a Minerva grant.}



\date{\version}


\begin{document}


\begin{abstract}
  We establish relations between the bandwidth and the treewidth of bounded  
  degree graphs $G$, and relate these parameters to   the size of a separator
  of $G$ as well as the size of an expanding subgraph of $G$. Our results imply
  that if one of these parameters is sublinear in the number of vertices of $G$
  then so are all the others. This implies for example that graphs of fixed
  genus have sublinear bandwidth or, more generally, a corresponding result for
  graphs with any fixed forbidden minor. As a consequence we establish a simple
  criterion for universality for such classes of graphs and show for example
  that for each $\gamma>0$ every $n$-vertex graph with minimum degree
  $(\frac34+\gamma)n$ contains a copy of every bounded-degree planar graph on
  $n$ vertices if $n$ is sufficiently large.
\end{abstract}

\maketitle
\let\languagename\relax   


\section{Introduction}

There are a number of different parameters in graph theory which measure how well
a graph can be organised in a particular way, where the type of desired
arrangement is often motivated by geometrical properties, algorithmic
considerations, or specific applications. Well-known examples of such parameters
are the genus, the bandwidth, or the treewidth of a graph. The topic of this paper is
to discuss the relations between such parameters. We would like to determine how
they influence each other and what causes them to be large. To this end we will
mostly be interested in distinguishing between the cases when these parameters
are linear or sublinear in the number of vertices in the graph.

\medskip

We start with a few simple observations. Let $G=(V,E)$ be a graph on $n$
vertices. The \emph{bandwidth} of $G$ is denoted by $\bdw(G)$ and defined to be
the minimum positive integer $b$, such that there exists a labelling of the
vertices in $V$ by numbers $1,\dots,n$ so that the labels of every pair of
adjacent vertices differ by at most $b$. Clearly one reason for a graph to have
high bandwidth are vertices of high degree as $\bdw(G)\ge
\lceil\Delta(G)/2\rceil$, where $\Delta(G)$ is the maximum degree of $G$. It is
also clear that not all graphs of bounded maximum degree have sublinear
bandwidth: Consider for example a random bipartite graph $G$ on $n$ vertices with
bounded maximum degree. Indeed, with high probability, $G$ does not have small
bandwidth since in any linear ordering of its vertices there will be an edge
between the first $n/3$ and the last $n/3$ vertices in this ordering. The reason
behind this obstacle is that $G$ has good expansion properties (definitions and
exact statements are provided in Section~\ref{sec:results}). This implies that
graphs with sublinear bandwidth
cannot exhibit good expansion properties. One may ask whether the converse is
also true, i.e., whether the absence of big expanding subgraphs in bounded-degree
graphs must lead to small bandwidth. We will prove that this is indeed the case
via the existence of certain separators.

In fact, we will show a more general theorem in Section~\ref{sec:results}
(Theorem~\ref{thm:main}) which proves that the concepts of sublinear bandwidth,
sublinear treewidth, bad expansion properties, and sublinear separators are
equivalent for graphs of bounded maximum degree. In order to prove this
theorem, we will establish quantitative relations between the parameters involved
(see Theorem~\ref{thm:sep-band}, Theorem~\ref{thm:bound-trw}, and
Proposition~\ref{prop:band-bound}).

\medskip

As a byproduct of these relations we obtain sublinear bandwidth bounds for
several graph classes (see Section~\ref{sec:appl}). Since planar graphs are known
to have small separators~\cite{LipTar} for example, we get the following
result: The bandwidth of a planar graph on~$n$ vertices with maximum degree at
most~$\Delta$ is bounded from above by $\bdw(G) \le \frac{15n}{\log_\Delta(n)}$.
This extends a result of Chung~\cite{Chu} who proved that any $n$-vertex tree $T$
with maximum degree $\Delta$ has bandwidth at most $5n/\log_\Delta(n)$. Similar
upper bounds can be formulated for graphs of any fixed genus and, more generally,
for any graph class defined by a set of forbidden minors (see
Section~\ref{subsec:separators}).

In Section~\ref{subsec:universal} we conclude by considering
applications of these results to the domain of universal graphs and derive
implications such as the following. If $n$ is sufficiently large then any
$n$-vertex graph with minimum degree slightly above $\frac34n$ contains every
planar $n$-vertex graphs with bounded maximum degree as a subgraph (cf.\
Corollary~\ref{cor:planar-univers}).

\section{Definitions and Results}
\label{sec:results}


In this section we formulate our main results which provide relations between
the bandwidth, the treewidth, the expansion properties, and separators
of bounded degree graphs.
We need some further definitions.
For a graph $G=(V,E)$ and disjoint vertex sets $A,B\subset V$ we
denote by $E(A,B)$ the set of edges with one vertex in $A$ and one vertex in
$B$ and by $e(A,B)$ the number of such edges. 

Next, we will introduce the notions of \emph{tree
decomposition} and \emph{treewidth}. Roughly speaking, a tree decomposition
tries to arrange the vertices of a graph in a tree-like manner and the
treewidth measures how well this can be done.

\begin{definition}[treewidth]
  A tree decomposition of a graph $G=(V,E)$ is a pair $\left(\{X_i: i\in
    I\},\right.$ $\left.T=(I,F)\right)$ where $\{X_i: i \in I\}$ is a family of
    subsets $X_i\subseteq V$ and $T = (I,F)$ is a tree such that the following holds:
  \begin{enumerate}[label=\abc,leftmargin=*,itemsep=0mm,parsep=0mm,topsep=2mm]
    \item $\bigcup_{i \in I} X_i = V$,
    \item for every edge $\{v,w\} \in E$ there exists $i \in I$ with $\{v,w\}
      \subseteq X_i$,
    \item for every $i,j,k \in I$ the following holds: if $j$ lies on the path
    from $i$ to $k$ in $T$, then $X_i \cap X_k \subseteq X_j$.
  \end{enumerate}
  The width of $\left(\{X_i:i \in I\},T=(I,F)\right)$ is defined as
  $\max_{i \in I} |X_i| -1$. The tree\-width $\trw(G)$ of $G$ is
  the minimum width of a tree decomposition of $G$.
\end{definition}

It follows directly from the definition that $\trw(G)\le \bdw(G)$ for any
graph~$G$: if the vertices of $G$ are labelled by numbers $1,\dots,n$ such that
the labels of adjacent vertices differ by at most $b$, then $I:=[n-b]$,
$X_i:=\{i,\dots,i+b\}$ for $i\in I$ and $T:=(I,F)$ with $F:=\{\{i-1,i\}:2\le i
\le n-b\}$ define a tree decomposition of $G$ with width $b$.

A \emph{separator} in a graph is a small cut-set that splits the graph into
components of limited size.

\begin{definition}[separator, separation number] 
  Let $\tfrac12 \le \alpha < 1$ be a real number, $s\in\N$ and $G=(V,E)$ a
  graph. 
  A subset $S\subset V$ is said to be an $(s,\alpha)$-separator of $G$,
  if there exist subsets $A,B \subset V$ such that
  \begin{enumerate}[label=\abc,leftmargin=*,itemsep=0mm,parsep=0mm,topsep=2mm]
    \item $V = A \dcup B \dcup S$,
    \item $|S| \leq s$, $|A|, |B| \leq \alpha |V|$, and
    \item $E(A,B) = \emptyset$.
  \end{enumerate}
  We also say that $S$ separates $G$ into $A$ and $B$.
  The separation number $\s(G)$ of $G$ is the smallest $s$ such that all
  subgraphs $G'$ of $G$ have an $(s,2/3)$-separator.
\end{definition}

A vertex set is said to be expanding, if it has many external neighbours.
We call a graph \emph{bounded}, if every sufficiently large subgraph
contains a subset which is not expanding.

\begin{definition}[expander, bounded]
  Let $\eps>0$ be a real number, $b\in\N$ and 
consider graphs $G=(V,E)$ and $G'=(V',E')$. 
We say that $G'$ is an $\eps$-expander if all subsets $U\subset
  V'$ with $|U|\le |V'|/2$ fulfil $|N(U)|\ge \eps |U|$.  (Here $N(U)$ is
  the set of neighbours of vertices in $U$ that lie outside of $U$.)
  The graph $G$ is called $(b,\eps)$-bounded, if no subgraph $G'\subset
  G$ with $|V'| \ge b$ vertices is an $\eps$-expander.
  Finally, we define the $\eps$-boundedness $\bdd(G)$ of $G$ to be the minimum
  $b$ for which $G$ is $(b+1,\eps)$-bounded.
\end{definition}

There is a wealth of literature on expander graphs (see,
e.g., \cite{HooLinWig}). In particular, it is known that for example
(bipartite) random graphs with bounded maximum degree form a family of
$\eps$-expanders. We also loosely say that such graphs
have~\emph{good expansion properties}.
%
%

\medskip

As indicated earlier, our aim is to provide relations between the
parameters we defined above. A well known example of a result of this type is
the following theorem due to Robertson and Seymour which relates the
treewidth and the separation number of a graph.\footnote{In fact, their result
states that any graph $G$ has a $(\trw(G)+1,1/2)$-separator, and does not talk
about subgraphs of $G$. But since every
subgraph of $G$ has treewidth at most $\trw(G)$, it thus also has a
$(\trw(G)+1,1/2)$-separator and the result, as stated here, follows.}

\begin{theorem}[treewidth$\to$separator, \cite{RobSey_minors5}]    
\label{thm:trw-sep}
All graphs $G$ have separation number
\begin{equation*}
\s(G) \le \trw(G)+1.
\end{equation*}
\end{theorem}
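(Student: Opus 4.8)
The plan is to prove the statement

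$$\s(G)\le\trw(G)+1$$

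by exhibiting, for an arbitrary graph $G$ and an arbitrary subgraph $G'\subseteq G$, a $(\trw(G)+1,2/3)$-separator of $G'$. Since $\trw(G')\le\trw(G)$, it suffices to show that every graph $H$ has a $(\trw(H)+1,2/3)$-separator; applying this to each $H=G'$ and using $\trw(G')+1\le\trw(G)+1$ then yields the bound on $\s(G)$ directly from its definition as a supremum over subgraphs. So I would reduce at once to the following clean claim: \emph{given a tree decomposition $(\{X_i:i\in I\},T=(I,F))$ of $H$ of width $w=\trw(H)$, there is a bag $X_j$ such that $H-X_j$ splits into parts each of size at most $\tfrac23|V(H)|$.} Taking $S\deff X_j$ then gives $|S|\le w+1=\trw(H)+1$, and the two sides of $H-X_j$ supply the sets $A$ and $B$ (grouping components of $H-X_j$ suitably, which I address below).

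The key combinatorial tool is a \emph{centroid-style} argument on the tree $T$. First I would set up weights: assign to each node $i\in I$ the number of vertices of $H$ whose ``first appearance'' is in $X_i$, or more robustly just track, for each edge of $T$, how the vertex set $V(H)$ is distributed between the two sides. For an oriented edge $(i,i')\in F$, deleting it splits $T$ into two subtrees; let the component containing $i'$ define a set $V_{i'}$ of vertices lying only in bags on that side (after removing the separating bag $X_i\cap X_{i'}$). The essential step is to orient each edge of $T$ toward the heavier side, i.e.\ toward the subtree carrying more than half of the vertices. A standard argument shows that either this orientation has a sink node $j$ — a node all of whose incident edges point inward — or the weights force us to a single balanced node; in either case, at the sink node $j$ every subtree hanging off $j$ carries at most half the vertices. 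I would then use property~(c) of the tree decomposition (the coherence/running-intersection property) to argue that $X_j$ actually separates $H$: any vertex of $H$ not in $X_j$ lives in bags confined to a single component of $T-j$, because the bags containing a fixed vertex form a connected subtree of $T$.

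With $X_j$ identified as a separating set, the final step is to assemble $A$ and $B$ of size at most $\tfrac23|V(H)|$. Deleting $X_j$ from $H$ decomposes the remaining vertices into groups $W_1,\dots,W_d$, one per component of $T-j$, and there are no $H$-edges between distinct groups (again by coherence). Each $|W_k|\le\tfrac12|V(H)|\le\tfrac23|V(H)|$ by the centroid property. A greedy bin-packing into two bins $A,B$ then works: because every individual group has size at most half, one can distribute the groups so that neither bin exceeds $\tfrac23|V(H)|$ — add groups to the currently lighter bin, and a short calculation shows the lighter bin never surpasses the $\tfrac23$ threshold. This establishes properties~(a)–(c) of an $(s,2/3)$-separator with $s=w+1$.

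The main obstacle, and the part demanding genuine care, is the \emph{centroid/sink argument together with the coherence property~(c)}. One must verify rigorously that the heavy-edge orientation admits a node $j$ from which every branch carries at most half the weight, handling the tie case where an edge splits the vertices exactly in half (the orientation may then be ambiguous, and one fixes it arbitrarily). Equally delicate is invoking property~(c) correctly to guarantee (i) that bags containing a common vertex form a subtree, hence that removing $X_j$ genuinely disconnects the parts in $H$, and (ii) that vertices not in $X_j$ are cleanly assigned to exactly one component of $T-j$. The bin-packing into $A$ and $B$ and the bookkeeping of $|S|\le w+1$ are then routine.
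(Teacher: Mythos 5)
The paper offers no proof of this theorem at all: it is imported from Robertson and Seymour, and the paper's sole contribution is the footnote remark that treewidth is monotone under taking subgraphs, so a separator bound for $G$ automatically holds for every subgraph $G'$, as the definition of $\s(G)$ requires. Your proposal therefore takes a genuinely different (and more self-contained) route: you reprove the cited result via the standard centroid argument on a tree decomposition. That core is sound. Orienting each edge of $T$ toward the side carrying more than half of the vertices (leaving balanced edges unoriented) and taking the endpoint of a maximal directed path yields a node $j$ with no outgoing edges; the coherence property (c) then guarantees both that every vertex outside $X_j$ appears only in bags of a single component of $T-j$, and that no edge of $H$ joins vertices assigned to different components. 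Hence $S\deff X_j$ satisfies $|S|\le\trw(H)+1$ and $H-S$ splits into groups, each of size at most $n/2$ (where $n=|V(H)|$), with no edges between groups. It is worth noting that this single-bag argument naturally produces balance $2/3$ after the grouping step, not $1/2$; this is exactly why the paper's definition of separation number, with $\alpha=2/3$, is the right target for your reduction.

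There is, however, one step that fails as literally stated: the bin-packing. Adding groups to the currently lighter bin in \emph{arbitrary} order does not keep both bins below $\tfrac23 n$. Counterexample: groups of sizes $n/4$, $n/4$, $n/2$, processed in that order, give bins $(n/4,n/4)$ after two steps, and the third group then creates a bin of size $3n/4>\tfrac23 n$. The existence claim is true, but it needs a correct argument, e.g.\ one of the following. Either process the groups in \emph{decreasing} order of size: if adding a group of size $w$ to the lighter bin of size $a$ pushed it above $\tfrac23 n$, then from $2a+w\le n$ one gets $w>n/3$ and $a<n/3$; since all previously placed groups have size at least $w>n/3$, the lighter bin must be empty, so the new bin size is $w\le n/2\le\tfrac23 n$, a contradiction. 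Or argue by cases: if some group has size at least $n/3$, let that group alone be $A$ (so $|A|\le n/2$) and put all remaining groups into $B$ (so $|B|\le n-n/3=\tfrac23 n$); if every group has size less than $n/3$, add groups to $A$ until $|A|\ge n/3$ for the first time, which forces $|A|<\tfrac23 n$ and $|B|\le n-|A|\le\tfrac23 n$. With either repair, your proof is complete and correct.
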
 

This theorem states that graphs with small treewidth have small separators.
By repeatedly extracting separators, one can show that (a
qualitatively different version of) the converse also holds:
$\trw(G)\le\bigO(\s(G)\log n)$ for a graph $G$ on $n$ vertices (see,
e.g., \cite[Theorem 20]{Bod}). In this paper, we use a similar but more
involved argument to show that one can establish the following relation linking the
separation number with the bandwidth of graphs with bounded maximum degree.

\begin{theorem}[separator$\to$bandwidth]   
\label{thm:sep-band}
 For each~$\Delta\ge 2$ every graph $G$ on $n$ vertices with maximum degree
 $\Delta(G) \le \Delta$ has bandwidth
\begin{equation*}
  \bdw(G) \le \frac{6n}{\log_{\Delta} (n/\s(G))}.  
\end{equation*}
\end{theorem}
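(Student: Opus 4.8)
The plan is to prove Theorem~\ref{thm:sep-band} by a recursive separation argument, building the bandwidth labelling top-down from the separator hierarchy. The key quantitative target is that after roughly $\log_\Delta(n/\s(G))$ levels of recursive separation, the pieces become small enough that they can be laid out consecutively with controlled overlap. Concretely, I would proceed as follows. Set $s \deff \s(G)$. Since every subgraph of $G$ has an $(s,2/3)$-separator, I can recursively separate $G$: first find a separator $S_0$ of $G$ splitting it into parts $A,B$ each of size at most $\tfrac23 n$, then separate each of $A$ and $B$ (as induced subgraphs) in turn, and so on. After $k$ levels of recursion the graph is partitioned into at most $2^k$ ``leaf'' pieces, each of size at most $(2/3)^k n$, together with a collection of separators removed along the way. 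Choosing $k \approx \log_{3/2}(n/s)$ guarantees each leaf piece has at most $s$ vertices; at this scale the total number of separator vertices removed at each level stays bounded because separators have size at most $s$ times the number of active pieces on that level.

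The heart of the construction is turning this recursive decomposition into a linear ordering. I would order the leaf pieces according to a depth-first traversal of the binary separation tree, and then interleave the separator vertices so that each separator $S$ used to split a piece $P$ into $P_A$ and $P_B$ is placed in the labelling near the interface between the blocks of $P_A$ and $P_B$. The crucial point is that any edge of $G$ either lies inside a single leaf piece, or has at least one endpoint in some separator; an edge crossing between the $A$-side and the $B$-side of a separation must pass through that separator by property (c) of the separator definition, so its endpoints are never placed on opposite ``far'' ends of the ordering. Controlling the bandwidth then reduces to bounding, for each vertex, how many vertices lie between it and its neighbours. Since each leaf piece has at most $s$ vertices and each separator contributes at most $s$ vertices per level across at most $k$ levels, one obtains a bound of roughly $O(ks)$ on the span of any edge. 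Substituting $k \approx \log_{3/2}(n/s)$ and converting to base $\Delta$ — here is where the maximum degree $\Delta$ and the constant $6$ enter, since I would want a logarithm in base $\Delta$ in the denominator to match Chung's tree bound — yields $\bdw(G) \le 6n/\log_\Delta(n/s)$ after optimizing the recursion depth against the piece size.

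More carefully, the bounded-degree hypothesis is what lets me convert the additive ``$O(ks)$ vertices between neighbours'' estimate into the stated form with $n$ in the numerator. The idea, following Chung's approach for trees, is to recurse only down to pieces of size about $n/\log_\Delta(n/s)$ rather than all the way to size $s$, and then handle each such piece crudely: a piece on $m$ vertices of maximum degree $\Delta$ trivially has bandwidth at most $m$, but more to the point, consecutive placement of the small pieces together with the telescoping separator contributions gives the $n/\log_\Delta(n/s)$ scaling. I would balance the depth of recursion $k$ so that the number of separator vertices accumulated, $\sum_{\text{levels}} (\text{pieces on that level}) \cdot s$, matches the residual piece size, and the degree $\Delta$ governs how quickly one must stop to keep the logarithmic denominator in base $\Delta$.

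The main obstacle I anticipate is the bookkeeping in the interleaving step: guaranteeing simultaneously that (i) every leaf piece occupies a contiguous block, (ii) each separator is inserted at the correct interface so that no edge is stretched across unrelated blocks, and (iii) the accumulated count of separator vertices between any vertex and its neighbour telescopes to the claimed bound. Property (c) of the tree decomposition / separator structure is essential here — it is what ensures that separators inserted at higher levels of the recursion ``shield'' the lower-level pieces from each other — and making this precise, rather than the asymptotic counting, is the delicate part. Optimizing the trade-off between recursion depth and leaf size to produce exactly the constant $6$ and the base-$\Delta$ logarithm will require care, but it is routine once the combinatorial layout is fixed.
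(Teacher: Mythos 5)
Your high-level plan (recursively extract $(\s(G),2/3)$-separators, stop when the pieces have size about $n/\log_\Delta(n/\s(G))$, bound the accumulated separator vertices, and turn the decomposition into a linear order) matches the skeleton of the paper's proof. But there is a genuine gap at the heart of your layout step: the DFS interleaving does not control edges \emph{incident to separator vertices}. Property (c) of the separator definition only guarantees $E(A,B)=\emptyset$; it says nothing about edges from the separator into $A$ or $B$. In your ordering, a vertex $v$ of the top-level separator is placed at the interface between the block of $A$ and the block of $B$, yet $v$ may have a neighbour lying in a leaf piece placed at the far end of $A$'s block, giving that edge a span of order $|A|\le\tfrac23 n$ --- linear, not $O(ks)$. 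So your claimed bound of ``roughly $O(ks)$ on the span of any edge'' fails for such edges, and no reordering of the leaf pieces can repair this in general, since distinct separator vertices may have neighbours scattered over many different leaf pieces. (Your first-paragraph bookkeeping also does not work as stated: recursing down to size-$s$ pieces with $2^k$ pieces at level $k$ accumulates far more than $n$ separator vertices in the worst case; your later correction to stop at size about $n/\log_\Delta(n/s)$ addresses that particular issue, but not the edge-span problem.)

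The paper resolves exactly this difficulty, and it is where the maximum-degree hypothesis genuinely enters. Instead of interleaving separators at interfaces, it places the union $S$ of \emph{all} extracted separators into a single middle bucket, and surrounds $S$ with a distance-based buffer (Lemma~\ref{lem:decomposition}): every remaining vertex at distance $d<\lfloor b/2\rfloor$ from $S$ is placed in a bucket whose offset from the middle bucket is governed by $d$, and only vertices at distance at least $\lfloor b/2\rfloor$ from $S$ sit in the bucket of their own piece. Then every edge incident to $S$ automatically joins adjacent buckets, and every other edge joins consecutive buckets because distance to $S$ changes by at most $1$ along an edge. The price is that all vertices within distance $\lfloor b/2\rfloor$ of $S$ belong to the buffer $P$, and this is where $\Delta(G)\le\Delta$ is used: $|P|\le|S|\bigl(\Delta^{\lfloor b/2\rfloor}-1\bigr)/(\Delta-1)$, which with $b\approx\log_\Delta(n/\s(G))$ and $|S|\le(b-1)\s(G)$ gives $|S|+|P|=O\bigl(n/\log^2_\Delta(n/\s(G))\bigr)$ --- small enough for the claimed bound. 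The paper also needs a counting argument (Proposition~\ref{prop:tree}) to show that at most $b-1$ separators, hence at most $b$ leaf pieces, are ever extracted; your proposal has no substitute for this step. Without the buffer idea, or some equivalent mechanism for edges leaving separators, your construction cannot yield a sublinear bandwidth bound.
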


The proof of this theorem is provided in Section~\ref{sec:sep-band}. 
Observe that Theorems~\ref{thm:trw-sep} and~\ref{thm:sep-band} together with the 
obvious inequality $\trw(G)\le \bdw(G)$ tie the concepts of treewidth, bandwidth,
and separation number well together. 
Apart from the somewhat negative statement of \emph{not having} a small separator,
what can we say about a graph with large tree- or bandwidth? 
The next theorem states that such a graph must contain a big expander.


\begin{theorem}[bounded$\to$treewidth]     
\label{thm:bound-trw}
  Let $\eps > 0$ be constant. All graphs $G$ on $n$ vertices have treewidth
  $\trw(G) \le 2\bdd(G) + 2\eps n$.
%
\end{theorem}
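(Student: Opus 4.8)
The plan is to build an explicit tree decomposition of $G$ by recursively cutting along the sparse vertex-cuts that boundedness hands us, and to bound the width by a telescoping argument rather than by first balancing the cuts. Concretely, I define a recursive procedure $\mathrm{TD}(H)$ on induced subgraphs $H$ of $G$. If $|V(H)| \le \bdd(G)$ I return a single bag equal to $V(H)$. Otherwise $H$ is a subgraph on more than $\bdd(G)$ vertices, hence by definition of $\bdd(G)$ it is not an $\eps$-expander, which yields a set $U \subseteq V(H)$ with $|U| \le |V(H)|/2$ and $|N_H(U)| < \eps|U|$. Writing $S := N_H(U)$ and $B := V(H)\setminus(U\cup S)$, the set $S$ separates $U$ from $B$ in $H$. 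I then compute $\mathrm{TD}(H[U])$ and $\mathrm{TD}(H[B])$, insert $S$ into every bag of both of these decompositions, and join the two trees by a single edge between their roots.

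Checking that this yields a valid tree decomposition is routine but must be carried out. Every edge of $H$ lies inside $U$, inside $B$, inside $S$, or between $S$ and one of $U,B$, and in each case it is covered (the $S$--$U$ and $S$--$B$ edges because $S$ now sits in every bag of the respective side, and there are no $U$--$B$ edges). For the connectivity condition, a vertex of $U$ (resp.\ of $B$) keeps exactly the connected bag-set it had in $\mathrm{TD}(H[U])$ (resp.\ $\mathrm{TD}(H[B])$), whereas a vertex of $S$ occupies every bag of the whole tree. The key bookkeeping point is that a separator vertex introduced at one node is never reintroduced higher or lower in the recursion, since the two child regions are disjoint subsets of $V(H)\setminus S$; consequently each bag of the final decomposition of $G$ is exactly one leaf region together with the separators of all its ancestors on the recursion path.

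For the width, fix a root-to-leaf path $H=H_0,H_1,\dots,H_L$ with cut data $(U_i,S_i,B_i)$ at $H_i$, and set $m_i := |V(H_i)|$. The bag at the leaf has size at most $|V(H_L)| + \sum_{i<L}|S_i| \le \bdd(G) + \eps\sum_{i<L}|U_i|$, using $|S_i|<\eps|U_i|$. What removes any logarithmic loss is the inequality $|U_i|\le m_i - m_{i+1}$ at every step: if the path descends into the small side then $m_{i+1}=|U_i|\le m_i/2$, so $m_i-m_{i+1}\ge|U_i|$; and if it descends into $B_i$ then $m_i-m_{i+1}=|U_i|+|S_i|\ge|U_i|$. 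Hence $\sum_{i<L}|U_i| \le \sum_{i<L}(m_i-m_{i+1}) = m_0-m_L \le n$, so every bag has at most $\bdd(G)+\eps n$ vertices and therefore $\trw(G)\le \bdd(G)+\eps n -1$, comfortably below the claimed bound $2\bdd(G)+2\eps n$.

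The main obstacle is conceptual rather than computational. A naive attempt either to iterate sparse cuts while collecting all separators into one global set, or to first manufacture balanced separators and then recurse, loses a factor of $\log n$ (or $\log(1/\eps)$), because the small side $U$ of a sparse cut can be tiny and may have to be re-cut many times. The device that avoids this is to charge each separator $S_i$ to its own small side $U_i$ and to recurse on $U_i$ itself rather than discard it, so that along any single branch the quantities $|U_i|$ telescope against the shrinking region sizes. The step I would treat most carefully is verifying that this charging is simultaneously valid on every branch of the tree, which is precisely the content of the uniform inequality $|U_i|\le m_i-m_{i+1}$ established above.
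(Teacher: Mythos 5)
Your proof is correct, and it takes a genuinely different route from the paper's. The paper first establishes an intermediate lemma (bounded\,$\to$\,separator): by repeatedly extracting non-expanding sets $W_i$ with $|N(W_i)|\le\eps|W_i|$ until the remaining vertex set drops below $\tfrac23 n$, it produces a \emph{balanced} $(2\eps n/3,\,2/3)$-separator; the theorem then follows by induction on $n$, inserting this separator into every bag of the two sub-decompositions, with the balance (each side of size at most $\tfrac23 n$) making the induction close with the constant $2$. You skip the balancing step entirely: you recurse on a single, possibly very lopsided sparse cut $(U,S,B)$ and control the separators accumulated along each root-to-leaf path by the charging inequality $|S_i|<\eps|U_i|\le\eps(m_i-m_{i+1})$, which telescopes to at most $\eps n$; this yields the stronger bound $\trw(G)\le\bdd(G)+\eps n-1$. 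One remark on your closing commentary: the worry that ``balanced separators plus recursion'' must lose a $\log n$ factor is unfounded in this setting, and the paper's proof shows why --- since the separator extracted from a region of size $m$ has size at most $2\eps m/3$, i.e.\ proportional to the \emph{current} region, the separator sizes along any recursion path decay geometrically and sum to $O(\eps n)$; the logarithmic loss only arises when the separator size fails to shrink with the region, as in the classical bound $\trw(G)\le O(\s(G)\log n)$. So what the paper's route buys is the separator lemma itself (a clean statement tying boundedness directly to balanced separators) and a very short induction; what your route buys is a sharper constant and a one-step recursion that needs no preprocessing of the cuts.
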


A result with similar implications was recently proved by Grohe and Marx
in~\cite{GroMar}. It shows that $b_{\eps}(G) < \eps n$ implies $\trw(G) \le 2\eps
n$.
For the sake of being self-contained we present our (short) proof of
Theorem~\ref{thm:bound-trw} in Section~\ref{sec:bound}. In addition, it is not
difficult to see that conversely the boundedness of a graph can be estimated via
its bandwidth -- which we prove in Section~\ref{sec:bound}, too.

\begin{proposition}[bandwidth$\to$bounded]     
\label{prop:band-bound}
 Let $\eps>0$ be constant. All graphs $G$ on $n$ vertices have $\bdd(G)\le 2
 \bdw(G)/\eps$.
\end{proposition}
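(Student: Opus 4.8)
The plan is to read the boundedness parameter off a bandwidth-optimal vertex labelling. Write $b=\bdw(G)$ and fix a bijection $\phi\colon V\to[n]$ witnessing it, so that $|\phi(u)-\phi(v)|\le b$ for every edge $uv\in E$. Since $\bdd(G)$ is precisely the number of vertices of a largest $\eps$-expanding subgraph of $G$, it suffices to exhibit, in every subgraph $G'=(V',E')$ whose order exceeds the claimed threshold, a set $U\subset V'$ with $|U|\le|V'|/2$ and $|N(U)|<\eps|U|$; such a $U$ certifies that $G'$ is not an $\eps$-expander.

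The natural candidate for $U$ is a ``prefix'' of $G'$ in the order inherited from $\phi$. Concretely, list $V'=\{w_1,\dots,w_m\}$ with $\phi(w_1)<\dots<\phi(w_m)$, set $k=\lfloor m/2\rfloor$ and $U=\{w_1,\dots,w_k\}$, so that $|U|\le|V'|/2$ as required. The key step is the observation that a prefix has a tiny outer boundary. Writing $t=\phi(w_k)$, any external neighbour $w_j$ of $U$ lies outside $U$, hence satisfies $\phi(w_j)>t$, while the defining edge to some $w_i\in U$ forces $\phi(w_j)\le\phi(w_i)+b\le t+b$ (this uses $E'\subseteq E$, so that edges of $G'$ respect the labelling of $G$). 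Thus every vertex of $N(U)$ receives a $\phi$-label in the window $\{t+1,\dots,t+b\}$; since $\phi$ is injective this window houses at most $b$ vertices, and therefore $|N(U)|\le b=\bdw(G)$.

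It remains to convert this bound into the stated inequality. Once $|V'|$ is large enough that $\lfloor|V'|/2\rfloor>\bdw(G)/\eps$, which happens once $|V'|$ is a little above $2\bdw(G)/\eps$, we have $\eps|U|>\bdw(G)\ge|N(U)|$, so $U$ witnesses the failure of expansion. Hence every sufficiently large subgraph fails to expand, the largest expanding subgraph has at most $2\bdw(G)/\eps$ vertices, and $\bdd(G)\le2\bdw(G)/\eps$.

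I do not expect a serious obstacle here: the whole argument rests on the single combinatorial fact that an edge spans at most $b$ positions, so an initial segment of the labelling cannot reach far ahead. The only points demanding a little care are checking that the chosen $U$ is an admissible test set (that is, $|U|\le|V'|/2$, which is why one takes the lower half rather than anything larger), and handling the rounding when $m$ is odd or $\bdw(G)/\eps$ is an integer; these affect only the additive error in the threshold and are routine.
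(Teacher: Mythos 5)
Your proof is correct and is essentially the paper's own argument read in the contrapositive: the paper takes the first half of a largest expander with respect to an arbitrary labelling and uses expansion to force an edge of length at least $\eps\bdd(G)/2$, whereas you take the first half of an arbitrary large subgraph with respect to an optimal labelling and use the bandwidth bound to confine its external neighbourhood to a window of at most $\bdw(G)$ labels --- the same prefix-plus-external-neighbourhood count, run in the opposite direction. The rounding slack you flag (which strictly speaking only yields $\bdd(G)\le 2\bdw(G)/\eps+1$ when the relevant subgraph has odd order) is equally present in the paper's proof, which silently treats $\bdd(G)/2$ as an integer, so your write-up matches the original in both substance and precision.
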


A qualitative consequence summarising the four results above is given in the following theorem. It states that if one
of the parameters bandwidth, treewidth, separation number, or boundedness is
sublinear for a family of bounded degree graphs, then so are the others. 

\begin{theorem}[sublinear equivalence theorem]
  \label{thm:main}
  Let $\Delta$ be an arbitrary but fixed positive integer and consider a
  hereditary class of graphs $\cC$ such that all graphs in $\cC$ have maximum
  degree at most $\Delta$. Denote by $\cC_n$ the set of those graphs in $\cC$
  with $n$ vertices. Then the following four properties are equivalent:
  \begin{enumerate}[label=\arab,leftmargin=*,itemsep=0mm,parsep=0mm,topsep=2mm]
  \item\label{it:main:1}
    For all $\beta_1>0$ there is $n_1$ s.t. $\trw(G)\le\beta_1 n$ for all
    $G\in\cC_n$ with $n\ge n_1$.
  \item\label{it:main:2}
    For all $\beta_2>0$ there is $n_2$ s.t. $\bdw(G)\le\beta_2 n$ for all
    $G\in\cC_n$ with $n\ge n_2$.
  \item\label{it:main:3}
    For all $\beta_3,\eps>0$ there is $n_3$ s.t. $\bdd(G)\le\beta_3 n$ for all
    $G\in\cC_n$ with $n\ge n_3$.
  \item\label{it:main:4}
    For all $\beta_4>0$ there is $n_4$ s.t. $\s(G)\le\beta_4 n$ for all
    $G\in\cC_n$ with $n\ge n_4$.
  \end{enumerate}
\end{theorem}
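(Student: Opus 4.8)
The plan is to deduce the equivalence of \ref{it:main:1}--\ref{it:main:4} by arranging the four quantitative relations already established into a cycle of implications. Concretely, I would prove \ref{it:main:2}$\Rightarrow$\ref{it:main:1} from the trivial bound $\trw(G)\le\bdw(G)$, then \ref{it:main:1}$\Rightarrow$\ref{it:main:4} from Theorem~\ref{thm:trw-sep} ($\s(G)\le\trw(G)+1$), and then \ref{it:main:4}$\Rightarrow$\ref{it:main:2} from Theorem~\ref{thm:sep-band}. To fold in boundedness I would add the two further implications \ref{it:main:2}$\Rightarrow$\ref{it:main:3}, via Proposition~\ref{prop:band-bound} ($\bdd(G)\le 2\bdw(G)/\eps$), and \ref{it:main:3}$\Rightarrow$\ref{it:main:1}, via Theorem~\ref{thm:bound-trw} ($\trw(G)\le 2\bdd(G)+2\eps n$). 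One checks that the resulting digraph on $\{\ref{it:main:1},\dots,\ref{it:main:4}\}$, with arcs $\ref{it:main:2}\to\ref{it:main:1}\to\ref{it:main:4}\to\ref{it:main:2}$ and $\ref{it:main:2}\to\ref{it:main:3}\to\ref{it:main:1}$, is strongly connected, so all four properties are pairwise equivalent. Since every relation used holds for each individual graph, no appeal to the hereditary structure of $\cC$ is needed; and as the case $\Delta=1$ is trivial we may assume $\Delta\ge2$, which is the hypothesis of Theorem~\ref{thm:sep-band}.

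All implications except \ref{it:main:4}$\Rightarrow$\ref{it:main:2} are pure constant bookkeeping. For instance, for \ref{it:main:3}$\Rightarrow$\ref{it:main:1}, given a target $\beta_1$ I would apply Theorem~\ref{thm:bound-trw} with $\eps\deff\beta_1/4$, so that $2\eps n=\beta_1 n/2$, and then invoke \ref{it:main:3} with this same $\eps$ and $\beta_3\deff\beta_1/4$ to bound $2\bdd(G)\le\beta_1 n/2$ for all large $n$; adding the two halves gives $\trw(G)\le\beta_1 n$. The implication \ref{it:main:2}$\Rightarrow$\ref{it:main:3} is analogous, applying \ref{it:main:2} with $\beta_2\deff\eps\beta_3/2$. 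Here it is essential that property~\ref{it:main:3} quantifies over all $\eps>0$, so that the particular $\eps$ forced by Theorem~\ref{thm:bound-trw} (respectively by Proposition~\ref{prop:band-bound}) is available.

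The one step that is not purely formal is \ref{it:main:4}$\Rightarrow$\ref{it:main:2}, and I expect it to be the main obstacle, because Theorem~\ref{thm:sep-band} only controls the bandwidth through the logarithm $\log_\Delta(n/\s(G))$. Given a target $\beta_2$, I would unwind the desired inequality $6n/\log_\Delta(n/\s(G))\le\beta_2 n$ into the equivalent requirement $\s(G)\le n\,\Delta^{-6/\beta_2}$, and then invoke \ref{it:main:4} with the fixed constant $\beta_4\deff\Delta^{-6/\beta_2}$. This is exactly where the qualitative nature of the hypothesis is decisive: the passage from separator to bandwidth costs an exponential factor, turning a modest $\beta_2$ into a $\beta_4$ that is exponentially small in $1/\beta_2$, yet sublinearity is a statement about \emph{every} positive constant, so such a $\beta_4$ is still admissible. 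Thus the logarithmic loss, which would be fatal for a quantitative bound, is harmless for the qualitative equivalence, and the cycle closes.
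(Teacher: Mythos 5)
Your proposal is correct and takes essentially the same approach as the paper: the paper proves exactly the cycle \ref{it:main:1}$\Rightarrow$\ref{it:main:4}$\Rightarrow$\ref{it:main:2}$\Rightarrow$\ref{it:main:3}$\Rightarrow$\ref{it:main:1} with the same ingredients and the same parameter choices you describe (Theorem~\ref{thm:trw-sep} with $n\ge 2/\beta_4$ to absorb the $+1$, Theorem~\ref{thm:sep-band} with $\beta_4=d^{-6/\beta_2}$ where $d=\max\{2,\Delta\}$ plays the role of your $\Delta=1$ remark, Proposition~\ref{prop:band-bound} with $\beta_2=\eps\beta_3/2$, and Theorem~\ref{thm:bound-trw} with $\beta_3=\eps=\beta_1/4$). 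Your extra arc \ref{it:main:2}$\Rightarrow$\ref{it:main:1} via $\trw(G)\le\bdw(G)$ is harmless redundancy, since your remaining four arcs already form the paper's cycle.
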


The paper is organized as follows. Section~\ref{sec:proofs} contains the proofs
of all the results mentioned so far: First we derive Theorem~\ref{thm:main}
from Theorems~\ref{thm:trw-sep}, \ref{thm:sep-band},
\ref{thm:bound-trw} and Proposition~\ref{prop:band-bound}. Then
Section~\ref{sec:sep-band} is devoted to the proof of
Theorem~\ref{thm:sep-band}, whereas Section~\ref{sec:bound} contains the proofs
of Theorem~\ref{thm:bound-trw} and Proposition~\ref{prop:band-bound}.
Finally, in Section~\ref{sec:appl}, we apply our results to deduce that certain
classes of graphs have sublinear bandwidth and can therefore be embedded as
spanning subgraphs into graphs of high minimum degree.

\section{Proofs}\label{sec:proofs}

\subsection{Proof of Theorem~\ref{thm:main}}

\begin{proof} 
  \ref{it:main:1}$\Then$\ref{it:main:4}:\quad
  Given $\beta_4 > 0$ set $\beta_1\deff\beta_4/2$, let $n_1$ be the
  constant from~\ref{it:main:1} for this $\beta_1$, and set $n_4 \deff 
  \max\{n_1,2/\beta_4\}$. Now consider $G\in\cC_n$ with $n \ge n_4$. By
  assumption we have $\trw(G)\le\beta_1 n$ and thus we can apply
  Theorem~\ref{thm:trw-sep} to conclude that
  $\s(G)\le\trw(G) + 1\le\beta_1 n + 1\le(\beta_4/2+1/n)n
   \le \beta_4 n$.
   
  \ref{it:main:4}$\Then$\ref{it:main:2}:\quad
  Given $\beta_2 > 0$ let $d:=\max\{2,\Delta\}$, set $\beta_4\deff
  d^{-6/\beta_2}$, get $n_4$ from~\ref{it:main:4} for this~$\beta_4$, and set
  $n_2\deff n_4$. Let $G\in\cC_n$ with $n \ge n_2$. We conclude
  from~\ref{it:main:4} and Theorem~\ref{thm:sep-band} that
  \begin{equation*}
    \bdw(G)\le\frac{6n}{\log_{d}n-\log_{d}\s(G)}
    \le\frac{6n}{\log_{d}n-\log_{d}(d^{-6/\beta_2}n)} 
    = \beta_2 n.
  \end{equation*}

  \ref{it:main:2}$\Then$\ref{it:main:3}:\quad
  Given $\beta_3, \eps > 0$ set $\beta_2\deff\eps\beta_3/2$, get
  $n_2$ from~\ref{it:main:2} for this $\beta_2$ and set $n_3\deff n_2$.
  By~\ref{it:main:2} and Proposition~\ref{prop:band-bound} we get for
  $G\in\cC_n$ with $n \ge n_3$ that
  $\bdd(G)\le2\bdw(G)/\eps\le2\beta_2n/\eps\le\beta_3 n$.
  
  \ref{it:main:3}$\Then$\ref{it:main:1}:\quad
  Given $\beta_1>0$, set $\beta_3\deff\beta_1/4$, $\eps\deff\beta_1/4$ and
  get $n_3$ from~\ref{it:main:3} for this $\beta_3$ and $\eps$, and set
  $n_1 \deff n_3$. Let $G\in\cC_n$ with $n \ge n_1$. 
  Then~\ref{it:main:3} and Theorem~\ref{thm:bound-trw} imply
  $\trw(G)\le2\bdd(G)+2\eps n\le2\beta_3n+2(\beta_1/4)n=\beta_1n$.
\end{proof}

\subsection{Separation and bandwidth}\label{sec:sep-band}

For the proof of Theorem~\ref{thm:sep-band} we will use the following
decomposition result which roughly states the following. If the removal of
a small separator $S$ decomposes the vertex set of a graph $G$ into relatively
small components $R_i\dcup P_i$ such that the vertices in $P_i$ form a ``buffer''
between the vertices in the separator $S$ and the set of remaining vertices
$R_i$ in the sense that $\dist_G(S,R_i)$ is sufficiently big,
then the bandwidth of $G$ is small. 

\begin{lemma}[decomposition lemma]
  \label{lem:decomposition}
	Let $G=(V,E)$ be a graph and $S$, $P$, and $R$ be vertex sets
	such that $V=S\dcup P\dcup R$. For $b,r\in\N$ with $b\ge 3$ assume further that there are
	decompositions $P=P_1\dcup\dots\dcup P_b$ and
	$R=R_1\dcup\dots\dcup R_b$ of $P$ and $R$, respectively, such that the
	following properties are satisfied:
	\begin{enumerate}[label=\rom,leftmargin=*,itemsep=0mm,parsep=0mm,topsep=2mm]
      \item\label{it:dec:i} $|R_i|\le r$,
      \item\label{it:dec:ii} $e(R_i\dcup P_i,R_j\dcup P_j)=0$ for all $1\le
      i<j\le b$,
      \item\label{it:dec:iii} $\dist_G(u,v)\ge \lfloor b/2\rfloor$ for all $u\in
      S$ and $v\in R_i$ with $i\in[b]$.
    \end{enumerate}
	Then $\bdw(G)< 2(|S|+|P|+r)$.
\end{lemma}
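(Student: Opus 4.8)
The plan is to exhibit an explicit linear order of $V$ and to bound its bandwidth by a single elementary principle: if $V$ is partitioned into consecutively ordered blocks $V_0,V_1,V_2,\dots$ so that every edge of $G$ joins two vertices lying in the same block or in two consecutive blocks, then numbering the vertices block by block yields $\bdw(G)\le \max_t(|V_t|+|V_{t+1}|)-1$. It therefore suffices to build such a block decomposition in which no two consecutive blocks together contain more than $|S|+|P|+2r$ vertices; since $|S|+|P|+2r-1<2(|S|+|P|+r)$ always holds, this already gives the claim (with no special case for $P=S=\emptyset$).

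The backbone of the construction is the breadth-first layering by distance to $S$: set $L_k=\{v\in V:\dist_G(S,v)=k\}$, so $L_0=S$ and every edge stays inside a layer or joins consecutive layers. Two structural facts drive the argument. First, by~\ref{it:dec:ii} the sets $R_i\dcup P_i$ are pairwise non-adjacent, so each layer splits into branch-parts with no edges between distinct branches, and any chunk communicates with the rest of $G$ only through $S$. Second, by~\ref{it:dec:iii} every $R_i$ sits in layers of depth at least $\lfloor b/2\rfloor$; hence the shallow layers $L_1,\dots,L_{\lfloor b/2\rfloor-1}$ consist entirely of buffer vertices, and along a shortest $S$–$R_i$ path each intermediate depth $1,\dots,\lfloor b/2\rfloor-1$ is occupied by a vertex of $P_i$. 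Thus each chunk carries a buffer ``cable'' reaching through all shallow depths before its heavy part $R_i$ appears.

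Simply ordering the layers $L_0,L_1,\dots$ already handles the buffer and the separator, since any two layers contain at most $|S|$ separator and at most $|P|$ buffer vertices in total (being subsets of $S$ and of $P$); but the deep layers are fatal, because all the $R_i$ may sit at the same depth, so one pair of layers could contain all of $R$. The remedy, and the technical heart of the proof, is to refine the deep region (depth $\ge\lfloor b/2\rfloor$) into a reordered family of blocks, each meeting at most one $R_i$. The deep parts $D_i$ (the depth-$\ge\lfloor b/2\rfloor$ portion of chunk $i$) are pairwise non-adjacent and each attaches to the backbone only through its own cable, so they can be released one chunk at a time; the role of~\ref{it:dec:iii} is that the cables supply $\lfloor b/2\rfloor$ depths of slack with which to stagger the chunks, and by splitting the chunks into two groups processed on either side of $S$ one only ever accommodates $\lceil b/2\rceil$ of them per direction, which is what makes the cable length suffice. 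Once this staggering is in place, every block contributes at most $|S|$ from the separator (only the central block), buffer that is a subset of $P$ and hence shared between any two consecutive blocks with total at most $|P|$, and at most $r$ from a single $R_i$; so two consecutive blocks hold fewer than $|S|+|P|+2r$ vertices, giving $\bdw(G)\le|S|+|P|+2r-1<2(|S|+|P|+r)$.

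The step I expect to be the main obstacle is making this staggering precise while preserving the consecutive-block property for the \emph{within-chunk} edges: when $R_i$ is pushed into its allotted block, the whole of chunk $i$'s cable must travel alongside it so that no edge of the cable leaps over an intervening chunk. This is exactly where one must use that $P_i$ realises every intermediate depth, and must verify that the two-sided, depth-staggered schedule never requires a cable to bridge more than one block boundary at a time; the $\lfloor b/2\rfloor$ vs.\ $b$ bookkeeping is precisely what the two-directional split is designed to reconcile. Once the schedule is fixed, the separator-and-buffer accounting is routine, since $S$ and $P$ enter as genuine subsets and are automatically split between at most two neighbouring blocks.
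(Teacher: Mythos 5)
Your strategy is in essence the paper's own: a central block containing $S$, one block per chunk so that each block meets at most one $R_i$, buffer vertices distributed by their distance to $S$, a two-sided (left/right of $S$) split of the chunks so that the available depth $\lfloor b/2\rfloor$ covers all required offsets, and the standard fact that a consecutive-block ordering has bandwidth bounded by the largest two adjacent blocks. The final accounting is also the same. But there is a genuine gap, and you flag it yourself: you never actually define the staggered assignment of the buffer vertices to blocks, and you never verify the one property on which the whole argument rests, namely that after staggering, every edge of $G$ still joins vertices in the same or in consecutive blocks. Announcing that this is ``the technical heart'' and listing what ``one must verify'' is not a verification; as written, your argument shows that a suitable schedule would yield the bound, not that such a schedule exists.

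For comparison, here is how the paper closes exactly this gap, and why no ``cable continuity'' is needed. Index the blocks $B_1,\dots,B_b$, put $S$ into $B_{\lceil b/2\rceil}$ and $R_i$ into $B_i$, and assign $v\in P_i$ with $d:=\dist(S,v)$ to $B_i$ if $d\ge|\lceil b/2\rceil-i|$, and otherwise to the block at offset exactly $d$ from the central block on the side of $i$ (that is, $B_{\lceil b/2\rceil-d}$ when $i<\lceil b/2\rceil$ and $B_{\lceil b/2\rceil+d}$ when $i>\lceil b/2\rceil$). The consecutive-block property is then a short case analysis using only three facts: adjacent vertices have distances to $S$ differing by at most one; condition (ii) confines any edge to $S\cup P_i\cup R_i$ for a single $i$; and condition (iii) gives $\dist(S,v)\ge\lfloor b/2\rfloor\ge|\lceil b/2\rceil-i|$ for every $v\in R_i$ and every $i\in[b]$, so the vertices of $R_i$ are always ``deep enough'' to qualify for block $i$. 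In particular, your observation that $P_i$ realises every intermediate depth along a shortest $S$--$R_i$ path, while true, is never needed: the assignment above is a $1$-Lipschitz function of the depth by construction, so no edge inside a chunk can leap over a block. To complete your write-up you would have to supply precisely this rule and its case analysis; without it, the heart of the lemma is missing.
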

\begin{proof}
Assume we have $G=(V,E)$, $V=S\dcup P\dcup R$ and $b,r \in \N$ with the
properties stated above. 
Our first goal is to partition $V$ into pairwise disjoint sets $B_1,\dots,B_b$,
which we call \emph{buckets}, and that satisfy the following property:
\begin{equation}\label{eq:dec:edges}
  \text{If $\{u,v\}\in E$ for $u\in B_i$ and $v\in B_j$ then $|i-j|\le 1$.}
\end{equation}
To this end all vertices of $R_i$ are placed into bucket $B_i$ for each
$i \in [b]$ and the vertices of $S$ are placed into bucket $B_{\lceil
b/2\rceil}$. The remaining vertices from the sets $P_i$ are distributed over
the buckets according to their distance from $S$: vertex $v \in P_{i}$ is
assigned to bucket $B_{j(v)}$ where $j(v)\in [b]$ is defined by
\begin{equation}\label{eq:dec:jv}
  j(v):=\begin{cases}
    i & \text{ if } \dist(S,v) \ge |\lceil b/2\rceil-i|, \\    
    \lceil b/2\rceil - \dist(S,v) & \text{ if } \dist(S,v) < \lceil b/2\rceil-i,
    \\ \lceil b/2\rceil + \dist(S,v) & \text{ if } \dist(S,v) <
    i-\lceil b/2\rceil.
  \end{cases}
\end{equation}
This placement obviously satisfies 
\begin{equation}\label{eq:dec:B}
  |B_i|\le|S|+|P|+|R_i|\le|S|+|P|+r
\end{equation}
by construction and condition~\ref{it:dec:i}.
Moreover, we claim that it guarantees condition~\eqref{eq:dec:edges}. Indeed, let $\{u,v\}\in E$
be an edge. If $u$ and $v$ are both in $S$ then clearly~\eqref{eq:dec:edges} is
satisfied. Thus it remains to consider the case where, without loss of
generality, $u\in R_i\dcup P_i$ for some $i\in [b]$. By condition~\ref{it:dec:ii} this
implies $v\in S\dcup R_i\dcup P_i$. First assume that $v\in S$. Thus
$\dist(u,S)=1$ and from condition~\ref{it:dec:iii} we infer that $u\in P_i$.
Accordingly $u$ is placed into bucket
$B_{j(u)}\in\{B_{\lceil b/2\rceil-1},B_{\lceil b/2\rceil},B_{\lceil
b/2\rceil+1}\}$ by~\eqref{eq:dec:jv} and $v$ is placed into
bucket~$B_{\lceil b/2\rceil}$ and so we also get~\eqref{eq:dec:edges} in this
case. If both $u,v\in R_i\dcup P_i$, on the other hand, we are clearly done if
$u,v\in R_i$. So assume without loss of generality, that $u\in P_i$. If $v\in
P_i$ then we conclude from $|\dist(S,u)-\dist(S,v)|\le 1$
and~\eqref{eq:dec:jv} that $u$ is placed into bucket $B_{j(u)}$ and $v$ into
$B_{j(v)}$ with $|j(u)-j(v)|\le 1$. If $v\in R_i$, finally, observe
that $|\dist(S,u)-\dist(S,v)|\le 1$ together with condition~\ref{it:dec:iii}
implies that $\dist(S,u)\ge \lfloor b/2\rfloor-1$ and so $u$ is placed into
bucket $B_{j(u)}$ with $|j(u)-i|\le 1$, where $i$ is the index such that $v\in B_i$,
by~\eqref{eq:dec:jv}.
Thus we also get~\eqref{eq:dec:edges} in this last case.

Now we are ready to construct an ordering of~$V$ 
respecting the desired bandwidth bound. We start with the vertices in bucket
$B_1$, order them arbitrarily, proceed to the vertices in bucket $B_2$, order
them arbitrarily, and so on, up to bucket $B_b$. By
condition~\eqref{eq:dec:edges} this gives an ordering with bandwidth at most
twice as large as the largest bucket and thus we conclude from~\eqref{eq:dec:B}
that $\bdw(G) < 2(|S|+|P|+r)$.
\end{proof}

A decomposition of the vertices of $G$ into buckets as in the proof of
Lemma~\ref{lem:decomposition} is also called a path partition of $G$ and
appears, e.g., in~\cite{DujSudWoo}.

Before we get to the proof of Theorem~\ref{thm:sep-band}, we will establish 
the following technical observation about labelled trees.

\begin{proposition}\label{prop:tree}
  Let~$b$ be a positive real,
  $T=(V,E)$ be a tree with $|V|\ge 3$, and $\ell:V\to[0,1]$ be a real valued labelling of its
  vertices such that $\sum_{v\in V}\ell(v)\le 1$.
  Denote further for all $v\in V$ by $L(v)$ the set of leaves that 
are adjacent to $v$ and suppose that $\ell(v)+\sum_{u\in L(v)}\ell(u)\ge|L(v)|/b$.
  Then $T$ has at most $b$ leaves in total.
\end{proposition}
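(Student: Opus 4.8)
The plan is to prove the bound by a direct averaging argument: I would sum the hypothesis $\ell(v)+\sum_{u\in L(v)}\ell(u)\ge |L(v)|/b$ over a suitable index set and compare the resulting total against the budget $\sum_{v\in V}\ell(v)\le 1$. To make this work with the right constant, the summation must be arranged so that each label is counted at most once.

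First I would record the structural fact that, since $T$ is connected and $|V|\ge 3$, no two leaves are adjacent: an edge joining two degree-one vertices would form an isolated component of size two, contradicting connectivity. Hence every leaf has a unique non-leaf neighbour. Writing $\mathcal{L}$ for the set of all leaves and $S$ for the set of non-leaf vertices, this means the sets $\{L(v):v\in S\}$ partition $\mathcal{L}$. In particular $\sum_{v\in S}|L(v)|=|\mathcal{L}|$ and $\sum_{v\in S}\sum_{u\in L(v)}\ell(u)=\sum_{u\in\mathcal{L}}\ell(u)$.

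Next I would sum the hypothesis over $v\in S$ only; for a leaf $v$ the set $L(v)$ is empty, so its inequality is vacuous and nothing is lost on the right-hand side. The right-hand side then equals $\tfrac1b\sum_{v\in S}|L(v)|=|\mathcal{L}|/b$, while the left-hand side equals $\sum_{v\in S}\ell(v)+\sum_{u\in\mathcal{L}}\ell(u)$. Since $S$ and $\mathcal{L}$ are disjoint and $\ell\ge 0$, this left-hand side is at most $\sum_{v\in V}\ell(v)\le 1$. Rearranging yields $|\mathcal{L}|\le b$, as desired.

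The one point that requires care—and the reason I restrict the summation to $S$ rather than all of $V$—is avoiding a double count of the leaf labels. Summing over every vertex would contribute both $\sum_{v\in V}\ell(v)$ and $\sum_{u\in\mathcal{L}}\ell(u)$, each only bounded by $1$, giving the weaker estimate $|\mathcal{L}|\le 2b$. Exploiting the disjointness of the support vertices from the leaves is precisely what makes the bound tight (it is attained by a star), so the only real obstacle is identifying the correct index set for the averaging; the remaining computation is a single line.
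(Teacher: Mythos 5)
Your proof is correct and is essentially identical to the paper's: the paper also partitions $V$ into leaves and internal vertices, observes that $\sum_{v\in V}\ell(v)=\sum_{v\in I}\bigl(\ell(v)+\sum_{u\in L(v)}\ell(u)\bigr)$, and sums the hypothesis over the internal vertices to get $1\ge |L|/b$. Your explicit justification that no two leaves are adjacent (so the sets $L(v)$ partition the leaves) is a detail the paper leaves implicit, but the argument is the same.
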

\begin{proof}
  Let $L\subset V$ be the set of leaves of $T$  and $I:=V\setminus L$ be the set
  of internal vertices. Clearly
  \begin{equation*}
    1 \ge \sum_{v\in V}\ell(v)
    =\sum_{v\in I}\left(\ell(v)+\sum_{u\in L(v)}\ell(u)\right)
    \ge\sum_{v\in I}\frac{|L(v)|}{b}
    =\frac{|L|}{b}
  \end{equation*}
  which implies the assertion.
\end{proof}

The idea of the proof of Theorem~\ref{thm:sep-band} is to repeatedly extract
separators from~$G$ and the pieces that result from the removal of such
separators. We denote the union of these separators by $S$, put all remaining
vertices with small distance from $S$ into sets $P_i$, and all other
vertices into sets $R_i$. Then we can apply the decomposition lemma
(Lemma~\ref{lem:decomposition}) to these sets $S$, $P_i$, and $R_i$. This,
together with some technical calculations, will give the desired bandwidth bound
for~$G$.

\begin{proof}[of Theorem~\ref{thm:sep-band}] Let $G = (V,E)$ be a graph on $n$
vertices with maximum degree $\Delta\ge 2$.
Observe that the desired bandwidth bound is trivial if $\Delta=2$ or if
$\log_{\Delta} n -\log_{\Delta}\s(G)\le 6$, so assume in the following that 
$\Delta \ge 3$ and $\log_{\Delta} n -\log_{\Delta}\s(G)>6$. Define
\begin{equation}\label{eq:sep-band:bt}
  \beta:=\log_{\Delta} n -\log_{\Delta} \s(G)
  \qquad\text{and}\qquad
  b:=\left\lfloor\beta\right\rfloor \ge 6
\end{equation}
and observe that with this choice of $\beta$ our aim is to show that $\bdw(G)\le
6n/\beta$.

The goal is to construct a partition $V = S\dcup P \dcup R$ with the
properties required by Lemma~\ref{lem:decomposition}. For this
purpose we will recursively use the fact that $G$ and its subgraphs have separators of
size at most $\s(G)$. 
In the $i$-th round we will identify separators $S_{i,k}$ in $G$ whose removal
splits $G$ into parts $V_{i,1},\dots,V_{i,b_i}$. 
%
The details are as follows.

In the first round let $S_{1,1}$ be an arbitrary $(\s(G),2/3)$-separator in $G$
that separates $G$ into $V_{1,1}$ and $V_{1,2}$ and set 
$b_1:=2$.
In the $i$-th round, $i>1$, consider each of the sets $V_{i-1,j}$ with
$j\in[b_{i-1}]$. If $|V_{i-1,j}|\le 2n/b$ then let
$V_{i,j'}:=V_{i-1,j}$, otherwise choose an $(\s(G),2/3)$-separator $S_{i,k}$ 
that separates $G[V_{i-1,j}]$ into sets $V_{i,j'}$ and $V_{i,j'+1}$ (where $k$
and $j'$ are appropriate indices, for simplicity we do not specify
them further). Let $S_i$ denote the union of all separators
constructed in this way (and in this round).
This finishes the $i$-th round. We stop this procedure as soon as all sets
$V_{i,j'}$ have size at most $2n/b$ and denote the corresponding $i$ by
$i^*$. Then $b_{i^*}$ is the number of sets $V_{i^*,j'}$ we end up with in the
last iteration. Let further $x_S$ be the number of separators $S_{i,k}$
extracted from~$G$ during this process in total.

\begin{claim}\label{cl:sep-band}
  We have $b_{i^*}\le b$ and $x_S\le b-1$.
\end{claim}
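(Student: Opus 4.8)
The plan is to read the recursive construction as a rooted binary tree and then feed it into Proposition~\ref{prop:tree}. First I would record the equivalence of the two bounds: every extracted separator splits one current region into exactly two, so the number of final regions exceeds the number of separators by exactly one, i.e. $b_{i^*}=x_S+1$. Hence $x_S\le b-1$ reduces to $b_{i^*}\le b$, and it suffices to prove the latter.

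Next I would encode the construction as a tree $T^*$ whose vertices are all extracted separators $S_{i,k}$ together with all final regions $V_{i^*,j'}$. The root is the first separator $S_{1,1}$, and I make the two regions produced by a separator its two children: a region of size at most $2n/b$ becomes a leaf (the corresponding final region), while a larger region is identified with the unique separator that splits it in the next round. Thus $T^*$ is a full binary tree whose internal vertices are exactly the separators and whose leaves are exactly the final regions, so $T^*$ has $b_{i^*}$ leaves. I label each vertex $v$ by $\ell(v):=|W_v|/n$, where $W_v$ is either the separator or the final region represented by $v$. Since the separators and the final regions are pairwise disjoint and together cover $V$, we get $\sum_{v}\ell(v)=1$, and clearly $\ell(v)\in[0,1]$, as required by Proposition~\ref{prop:tree}.

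The heart of the argument is verifying the local condition $\ell(v)+\sum_{u\in L(v)}\ell(u)\ge|L(v)|/b$ for every internal vertex, i.e. for every separator $S$ splitting a region $W$ into $A$ and $B$; here $L(S)$ collects those of $A,B$ that are final (for leaves the condition is vacuous, as a leaf's only neighbour is its separator parent). If both $A,B$ are final, then $|L(S)|=2$ and $\ell(S)+\ell(A)+\ell(B)=|W|/n>2/b$, since $W$ was split and so $|W|>2n/b$. I expect the single delicate case to be the main obstacle: exactly one child, say $A$, is final while $B$ is split further. Estimating $|A|$ directly gives only $|A|\gtrsim 2n/(3b)$, which is too weak for the required $1/b$. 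The resolution is to charge through $B$ instead: since $B$ is split we have $|B|>2n/b$, and since $S$ is a $(\s(G),2/3)$-separator we have $|B|\le\tfrac23|W|$, which forces $|W|>3n/b$; consequently $\ell(S)+\ell(A)=(|W|-|B|)/n\ge\tfrac13|W|/n>1/b=|L(S)|/b$, as needed. (Note this uses nothing about the size of $S$.)

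Finally, since $b\ge 6>2$ the whole graph $G$ is not already final, so at least one split occurs and $T^*$ has at least three vertices. Proposition~\ref{prop:tree} then applies and yields that $T^*$ has at most $b$ leaves, that is $b_{i^*}\le b$, whence $x_S=b_{i^*}-1\le b-1$.
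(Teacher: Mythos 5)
Your proposal is correct and takes essentially the same approach as the paper: both encode the recursive separation process as a full binary tree whose internal nodes are the extracted separators and whose leaves are the final regions, label vertices by normalized sizes, note $x_S=b_{i^*}-1$, and apply Proposition~\ref{prop:tree}, with the delicate one-leaf-child case resolved via the $2/3$-balance of the separator. The only difference is cosmetic algebra in that case (the paper bounds the contribution below by $\tfrac12|U'(w)|/n\ge 1/b$, you by $\tfrac13|W|/n>1/b$), which is the same use of the balance condition.
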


We will postpone the proof of this fact and first show how it implies the
theorem.
Set $S:=\bigcup_{i\in[i^*]}S_i$, for $j\in[b_{i^*}]$ define
\begin{equation*}
  P_j:=\{v \in V_{i^*,j} : \dist(v,S) < \lfloor b/2\rfloor\}
  \qquad\text{and}\qquad R_j = V_{i^*,j} \setminus P_j,
\end{equation*} 
set $P_j=R_j=\emptyset$  for $b_{i^*}<j\le b$ and finally define
$P:=\bigcup_{j\in[b]} P_j$ and $R:=\bigcup_{j\in[b]} R_j$.

We claim that $V=S\dcup P\dcup R$ is a partition that satisfies the requirements
of the decomposition lemma (Lemma~\ref{lem:decomposition}) with parameter~$b$
and~$r=2n/b$. To check this, observe first that for all $i\in[i^*]$ and
$j,j'\in[b_i]$ we have $e(V_{i,j},V_{i,j'})=0$ since $V_{i,j}$ and $V_{i,j'}$
were separated by some $S_{i',k}$. It follows that $e(R_j\dcup P_j,R_{j'}\dcup
P_{j'})=e(V_{i^*,j},V_{i^*,j'})=0$ for all $j,j'\in[b_{i^*}]$. Trivially
$e(R_j\dcup P_j,R_{j'}\dcup P_{j'})=0$ for all $j\in[b]$ and $b_{i^*}<j'\le b$
and therefore we get condition~\ref{it:dec:ii} of Lemma~\ref{lem:decomposition}.
Moreover, condition~\ref{it:dec:iii} is satisfied by the definition of the sets
$P_j$ and $R_j$ above. To verify condition~\ref{it:dec:i} note that
$|R_j|\le|V_{i^*,j}|\le2n/b=r$ for all $j\in[b_{i^*}]$ by the choice of~$i^*$ and
$|R_j|=0$ for all $b_{i^*}<j\le b$. Accordingly we can apply
Lemma~\ref{lem:decomposition} and infer that
\begin{equation}\label{eq:sep-band:bdw}
   \bdw(G)  \le 2\left(|S|+|P|+\frac{2n}{b}\right).
\end{equation}
In order to establish the desired bound on the bandwidth, we thus need to show
that $|S|+|P| \le n/\beta$. We first estimate the size of $S$. By
Claim~\ref{cl:sep-band} at most $x_S\le b-1$ separators have been extracted in
total, which implies
\begin{equation}\label{eq:sep-band:sep}
  |S|\le x_S\cdot\s(G)\le(b-1)\s(G).
\end{equation}
Furthermore all vertices $v\in P$ satisfy $\dist_G(v,S)\le \lfloor b/2\rfloor-1$
by definition. As~$G$ has maximum degree $\Delta$ there are at most
$|S|(\Delta^{\lfloor b/2\rfloor}-1)/(\Delta-1)$ vertices $v\in V \setminus S$
with this property and hence
\begin{equation*}\begin{split}  
 |S| + |P| 
 &\le|S| \left(1+\frac{\Delta^{\lfloor b/2\rfloor}-1}{\Delta-1}\right)
  \le |S| \frac{\Delta^{\beta/2}}{\Delta-3/2}\\
   &\le \frac{(b-1) \s(G)}{(\Delta-3/2)} \sqrt{\frac{n}{\s(G)}} 
   = \frac{(b-1) n}{(\Delta-3/2)} \sqrt{\frac{\s(G)}{n}}
\end{split}\end{equation*} 
where the second inequality holds for any $\Delta\ge3$ and $b\ge6$ and the third
inequality follows from~\eqref{eq:sep-band:bt} and~\eqref{eq:sep-band:sep}.
It is easy to verify that for any $\Delta\ge3$ and $x\ge\Delta^6$ we have
$(\Delta-3/2)\sqrt{x}\ge\tfrac98\log^2_{\Delta} x$. This together
with~\eqref{eq:sep-band:bt} gives $(\Delta-3/2)\sqrt{n/\s(G)}\ge\tfrac98\beta^2$
and hence we get
\begin{align}\label{eq:sep-band:SP}
  |S| + |P| \le \frac{8(b-1) n}{9\beta^2}\,.
\end{align} 
As $6\le b = \lfloor\beta\rfloor$ it is not difficult to check that
\[
	\frac{8(b-1)}{9\beta^2} + \frac 2b \le\frac{3}{\beta} \,.
\]
Together with~\eqref{eq:sep-band:bdw} and~\eqref{eq:sep-band:SP} this gives our
bound.

It remains to prove Claim~\ref{cl:sep-band}. Notice that the process of
repeatedly separating $G$ and its subgraphs can be seen as a binary tree~$T$
on vertex set~$W$ whose internal nodes represent the extraction of a separator
$S_{i,k}$ for some $i$ (and thus the separation of a subgraph of $G$ into two sets
$V_{i,j}$ and $V_{i,j'}$) and whose leaves represent the sets $V_{i,j}$ that
are of size at most $2n/b$. Clearly the number of leaves of $T$ is $b_{i^*}$
and the number of internal nodes $x_S$. As $T$ is a binary tree we conclude
$x_S=b_{i^*}-1$ and thus it suffices to show that $T$ has at most $b$ leaves in
order to establish the claim. To this end we would like to apply
Proposition~\ref{prop:tree}. Label an internal node of $T$ that
represents a separator $S_{i,k}$ with $|S_{i,k}|/n$, a leaf representing $V_{i,j}$ with
$|V_{i,j}|/n$ and denote the resulting labelling by $\ell$. Clearly we have
$\sum_{w\in W}\ell(w)=1$. Moreover we claim that 
\begin{equation}\label{eq:sep-band:ell}
  \ell(w)+\sum_{u\in L(w)}\ell(w)\ge|L(w)|/b
  \qquad\text{for all $w\in W$}
\end{equation}  
where $L(w)$ denotes the set of leaves that are children of $w$. Indeed, let
$w\in W$, notice that $|L(w)|\le 2$ as $T$ is a binary tree, and let $u$ and
$u'$ be the two children of $w$. If $|L(w)|=0$ we are done. If $|L(w)|>0$ then
$w$ represents a $(2/3,s(G))$-separator $S(w):=S_{i-1,k}$ that separated a graph
$G[V(w)]$ with $V(w):=V_{i-1,j}\ge 2n/b$ into two sets $U(w):=V_{i,j'}$ and
$U'(w):=V_{i,j'+1}$ such that
$|U(w)|+|U'(w)|+|S(w)|=|V(w)|$. In the case that $|L(w)|=2$ this
implies
\begin{equation*}
  \ell(w)+\ell(u)+\ell(u')=\frac{|S(w)|+|U(w)|+|U'(w)|}{n}
  =\frac{|V(w)|}{n}
  \ge 2/b
\end{equation*}
and thus we get~\eqref{eq:sep-band:ell}. If $|L(w)|=1$ on the other hand then,
without loss of generality, $u$ is a leaf of $T$ and $|U'(w)|>2n/b$. Since
$S(w)$ is a $(2/3,s(G))$-separator however we know that $|V(w)|\ge\frac32|U'(w)|$ and
hence
\begin{equation*}\begin{split}
  \ell(w)+\ell(u)&=\frac{|S(w)|+|U(w)|}{n}
  =\frac{|S(w)|+|V(w)|-|U'(w)|-|S(w)|}{n} \\
  &\ge\frac{\frac32|U'(w)|-|U'(w)|}{n}
  \ge\frac{\frac12(2n/b)}{n}
\end{split}\end{equation*}
which also gives~\eqref{eq:sep-band:ell} in this case.
Therefore we can apply Proposition~\ref{prop:tree} and infer that $T$ has at
most $b$ leaves as claimed.
%
\end{proof}

\subsection{Boundedness}\label{sec:bound}

In this section we study the relation between boundedness, bandwidth and
treewidth. We first give a proof of Proposition~\ref{prop:band-bound}. 


\begin{proof}[of Proposition~\ref{prop:band-bound}]
We have to show that for every graph $G$ and every $\eps>0$ 
the inequality $b_{\eps}(G) \le 2 \bdw(G) / \eps$ holds. 
Suppose that $G$ has $n$ vertices and let $\sigma: V \to [n]$ be an
arbitrary labelling of $G$.  
Furthermore assume that $V' \subseteq V$
with $|V'| = b_{\eps}(G)$ induces an $\eps$-expander in $G$. 
Define $V^*\subset V'$ to be the first $b_{\eps}(G)/2=|V'|/2$ vertices of $V'$
with respect to the ordering $\sigma$. Since $V'$ induces an $\eps$-expander in
$G$ there must be at least $\eps b_{\eps}(G)/2$ vertices in $N^*:=N(V^*)\cap V'$.
Let $u$ be the vertex in $N^*$
with maximal $\sigma(u)$ and $v\in V^*\cap N(u)$. As $u\not\in V^*$ and
$\sigma(u')>\sigma(v')$ for all $u'\in N^*$ and $v'\in V^*$ by the choice of
$V^*$ we have $|\sigma(u)-\sigma(v)|\ge|N^*|\ge\eps b_{\eps}(G)/2$.
Since this is true for every labelling $\sigma$ we can deduce 
that $b_{\eps}(G) \le 2\bdw(G) / \eps$.
\end{proof}

The remainder of this section is devoted to the proof of
Theorem~\ref{thm:bound-trw}. We will use the following lemma which establishes
a relation between boundedness and certain separators.

\begin{lemma}[bounded$\to$separator] \label{lem:bound-sep}
Let $G$ be a graph on $n$ vertices and let $\eps > 0$. If $G$ is
$(n/2,\eps)$-bounded then $G$ has a $(2\eps n/3, 2/3)$-separator.
\end{lemma}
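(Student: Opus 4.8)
The plan is to build the separator greedily by peeling off non-expanding sets, maintaining throughout a separation $V = A \dcup S \dcup B$ with no edges between $A$ and $B$ and with the running guarantee $|S| < \eps|A|$. I would start from $A = S = \emptyset$, $B = V$ and grow $A$ (while enlarging $S$) until $B$ has become small enough. Since $G$ is $(n/2,\eps)$-bounded, as long as $|B| \ge n/2$ the induced subgraph $G[B]$ fails to be an $\eps$-expander, and this failure is exactly what supplies the next non-expanding set to peel off.

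Concretely, while $|B| > 2n/3$ (so in particular $|B| > n/2$), I would use the failure of $G[B]$ to expand to obtain a nonempty $U \subseteq B$ with $|U| \le |B|/2$ and $|N_{G[B]}(U)| < \eps|U|$, and then update $A \leftarrow A \cup U$, $S \leftarrow S \cup N_{G[B]}(U)$, and $B \leftarrow B \setminus (U \cup N_{G[B]}(U))$. A short check shows this preserves the invariant that there are no $A$--$B$ edges: any neighbour of the newly added $U$ that lies in the old $B$ belongs to $N_{G[B]}(U)$ and is moved into $S$, while $U \subseteq B$ has no neighbour in the old $A$ by the invariant. Since each peeled boundary has size $< \eps|U|$, telescoping over the rounds gives $|S| < \eps|A|$ at every stage. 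As each round strictly shrinks $B$ (the witnessing $U$ is nonempty), the process terminates, and by the stopping rule the final $B$ satisfies $|B| \le 2n/3$; the grown side $A$ and the accumulated cut $S$ are then the candidate separation.

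The main obstacle is bounding $|A|$: a single peeled set $U$ may be as large as $|B|/2$, so one naively fears overshooting past $2n/3$ in the final round. The point that resolves this is that the two size bounds are coupled. Just before the final round one has $|B| > 2n/3$, hence $|A| + |S| = n - |B| < n/3$ and in particular $|A| < n/3$; moreover the set $U$ removed in that round satisfies $|U| \le |B|/2 \le (n - |A|)/2$. Therefore the final $A$ has size at most $|A| + (n-|A|)/2 = n/2 + |A|/2 < n/2 + n/6 = 2n/3$, so no overshoot occurs. Combining this with $|B| \le 2n/3$, the no-edges invariant, and $|S| < \eps|A| \le 2\eps n/3$ shows that $S$ is the desired $(2\eps n/3, 2/3)$-separator. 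I would write out the invariant maintenance and this final size estimate carefully, since that coupled bound is where the argument genuinely has content; the expansion input and the edge-counting are routine.
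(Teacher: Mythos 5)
Your proposal is correct and is essentially identical to the paper's own proof: the paper also greedily peels off non-expanding sets $W_i$ with $|N(W_i)|\le\eps|W_i|$ from the shrinking remainder, collects them into $A$ and their boundaries into $S$, stops once the remainder drops below $\tfrac23 n$, and bounds the final $|A|$ by exactly the same coupled estimate (using that the remainder had size at least $\tfrac23 n$ before the last peeling step, so at most half of it can be added to $A$). The only differences are cosmetic, such as your strict inequalities and your explicit remark on termination.
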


\begin{proof} 
Let $G = (V,E)$ with $|V|=n$ be $(n/2,\eps)$-bounded for $\eps>0$.
It follows that every subset $V' \subseteq V$ with $|V'|
\ge n/2$ induces a subgraph $G' \subseteq G$ with the following property:
there is $W \subseteq V'$ such that $|W| \le |V'|/2$ and $|N_{G'}(W)| \le \eps
|W|$. 
We use this fact to construct a $(2\eps n/3, 2/3)$-separator in the following
way:
\begin{enumerate}[leftmargin=*,itemsep=0mm,parsep=0mm,topsep=2mm]
  \item Define $V_1:=V$ and $i:=1$. 
  \item\label{it:Sep_Konstr_1} Let $G_i:=G[V_i]$.
  \item\label{it:Sep_Konstr_2} Find a subset $W_i \subseteq V_i$ with $|W_i| \le
  |V_i|/2$ and $|N_{G_i}(W_i)| \le \eps |W_i|$.
 \item Set $S_i:= N_{G_i}(W_i)$, $V_{i+1}:=V_i \setminus (W_i \cup S_i)$.
 \item If $|V_{i+1}| \ge\frac23n$ then set $i:=i+1$ and go to step (\ref{it:Sep_Konstr_1}).
 \item Set $i^*:=i$ and return
$$
A:=\bigcup_{i=1}^{i^*} W_i, \quad B:= V_{i^*+1}, \quad S:=\bigcup_{i=1}^{i^*} S_i .
$$
\end{enumerate}
This construction obviously returns a partition $V = A \dcup B \dcup S$
with $|B| <\frac23n$. 
Moreover,
$|V_{i^*}| \ge \frac23 n$ and $|W_{i^*}| \le |V_{i^*}|/2$ and hence
\begin{equation*}\begin{split}
|A| = n - |B| - |S| = n - |V_{i^*+1}| - |S| =  \\
n - ( |V_{i^*}| - |W_{i^*}| - |S_{i^*}|) - |S| \le
n - \frac{|V_{i^*}|}{2} \le \frac23 n.
\end{split}\end{equation*}
The upper bound on $|S|$ follows easily since
$$
|S| = \sum_{i=1}^{i^*} |N_{G_i}(W_i)| \le \sum_{i=1}^{i^*} \eps|W_i| 
= \eps |A| \le \frac23 \eps n. 
$$
It remains to show that $S$ separates $G$. This is indeed the case as
$N_G(A) \subseteq S$ by construction and thus $E(A, B) = \emptyset$.
\end{proof}
Now we can prove Theorem~\ref{thm:bound-trw}. As remarked earlier, 
Grohe and Marx~\cite{GroMar} independently gave a proof of an equivalent result 
which employs similar ideas but does not use separators explicitly.

\begin{proof}[of Theorem~\ref{thm:bound-trw}]
Let $G=(V,E)$ be a graph on $n$ vertices, $\eps > 0$, and let $b \ge
b_{\eps}(G)$. 
It follows immediately from the definition of boundedness that every subgraph $G'
\subseteq G$ with $G'=(V',E')$ and $|V'| \ge 2b$ also has $b_{\eps}(G') \le b$. 

We now prove Theorem~\ref{thm:bound-trw} by induction on the size of $G$.
The relation
$\trw(G) \le 2 \eps n + 2b$ trivially holds if $n\le 2b$. So let $G$ have $n >
2b$ vertices and assume that the theorem holds for all graphs with less than $n$
vertices. Then $G$ is $(b, \eps)$-bounded and thus has a $(2\eps
n/3,2/3)$-separator $S$ by Lemma~\ref{lem:bound-sep}. Assume that $S$ separates
$G$ into the two subgraphs $G_1=(V_1,E_1)$ and $G_2=(V_2,E_2)$. Let
$(\mathcal{X}_1,T_1)$ and $(\mathcal{X}_2,T_2)$ be tree decompositions of $G_1$
and $G_2$, respectively, such that $\mathcal{X}_1\cap\mathcal{X}_2=\emptyset$. We
use them to construct a tree decomposition $(\mathcal{X},T)$ of $G$ as follows.
Let $\mathcal{X} = \{ X_i \cup S : X_i \in\mathcal{X}_1\} \cup   \{ X_i \cup S :
X_i \in\mathcal{X}_2\}$ and $T = (I_1 \cup I_2, F = F_1 \cup F_2 \cup \{e\})$
where $e$ is an arbitrary edge between the two trees. This is indeed a tree
decomposition of $G$: Every vertex $v \in V$ belongs to at least one $X_i \in
\mathcal{X}$ and for every edge $\{v,w\} \in E$ there exists $i \in I$ (where $I$
is the index set of $\mathcal{X}$) with $\{v,w\} \subseteq X_i$. This is trivial
for $\{v,w\} \subseteq V_i$ and follows from the definition of $\mathcal{X}$ for
$v \in S$ and $w \in V_i$. Since $S$ separates $G$ there are no edges $\{v,w\}$
with $v \in V_1$ and $w \in V_2$. For the same reason the third property of a
tree decomposition holds: if $j$ lies on the path from $i$ to $k$ in $T$, then
$X_i \cap X_k \subseteq X_j$ as the intersection is $S$ if $X_i, X_k$ are subsets
of $V_1$ and $V_2$ respectively.

We have seen that $(\mathcal{X},T)$ is a tree decomposition of $G$ and
can estimate its width as follows: $\trw(G) \le \max\{\trw(G_1), \trw(G_2)\} +
|S|$. With the induction hypothesis we get
\begin{align*}
\trw(G) &\le \max\{2\eps\cdot|V_1|+2b,\,\,2\eps\cdot |V_2|+2b\} + |S|\\
 &\le 2 \eps n + 2b.
\end{align*}
where the second inequality follows from $|V_i| \le (2/3)n$
and $|S|\le (2\eps n)/3$. 
\end{proof}

\section{Applications}
\label{sec:appl}

For many interesting bounded degree graph classes (non-trivial) upper bounds
on the bandwidth are not at hand. A wealth of results however has been obtained
about the existence of sublinear separators. This illustrates the importance
of Theorem~\ref{thm:main}. In this section we will give examples of such
separator theorems and provide applications of them in
conjunction with Theorem~\ref{thm:main}.

\subsection{Separator theorems}\label{subsec:separators}

A classical result in the theory of planar graphs concerns the existence of
separators of size~$2\sqrt{2n}$ in any planar graph on~$n$ vertices proved by
Lipton and Tarjan~\cite{LipTar} in 1977. Clearly, together with
Theorem~\ref{thm:sep-band} this result implies the following theorem.

\begin{corollary}
  \label{cor:planar}
  Let $G$ be a planar graph on $n$ vertices with maximum degree at
  most~$\Delta\ge 2$. Then the bandwidth of $G$ is bounded from above by
  \[
   \bdw(G) \le \frac{15n}{\log_\Delta(n)}.
  \]
\end{corollary}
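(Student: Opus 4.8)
The plan is to combine the Lipton--Tarjan planar separator theorem with Theorem~\ref{thm:sep-band}. The Lipton--Tarjan result guarantees that every planar graph on $m$ vertices has a separator of size at most $2\sqrt{2m}\le 2\sqrt 2\sqrt m$ that splits it into parts of size at most $\tfrac23 m$. Since planarity is preserved under taking subgraphs, every subgraph $G'$ of our planar graph $G$ is again planar, and applying the theorem to each such $G'$ shows that $G'$ has a $(2\sqrt2\sqrt{|V(G')|},2/3)$-separator. Because $|V(G')|\le n$, this yields $\s(G)\le 2\sqrt{2}\sqrt n$ directly from the definition of the separation number.

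Next I would feed this bound into Theorem~\ref{thm:sep-band}. That theorem gives $\bdw(G)\le 6n/\log_\Delta(n/\s(G))$ for any graph with maximum degree at most $\Delta\ge2$, so it remains only to estimate the denominator from below using $\s(G)\le 2\sqrt2\sqrt n$. I would compute
\begin{equation*}
  \log_\Delta\!\frac{n}{\s(G)}
  \ge \log_\Delta\!\frac{n}{2\sqrt2\,\sqrt n}
  = \log_\Delta\!\frac{\sqrt n}{2\sqrt 2}
  = \tfrac12\log_\Delta n-\log_\Delta(2\sqrt2).
\end{equation*}
Since $\Delta\ge2$ we have $\log_\Delta(2\sqrt2)=\tfrac32\log_\Delta 2\le\tfrac32$, so the denominator is at least $\tfrac12\log_\Delta n-\tfrac32$. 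For $n$ large enough this is comfortably bounded below by, say, $\tfrac25\log_\Delta n$, which upon substitution turns the bound $6n/\log_\Delta(n/\s(G))$ into $15n/\log_\Delta n$.

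The only real obstacle is bookkeeping the constants so that the clean bound $15n/\log_\Delta n$ holds for \emph{all} $n$ rather than only asymptotically, since the subtracted term $\tfrac32$ matters when $\log_\Delta n$ is small. I expect this to be handled exactly as in the proof of Theorem~\ref{thm:sep-band} itself: observe that the claimed inequality is trivial whenever $\log_\Delta n$ is so small that $15n/\log_\Delta n\ge n$ (equivalently $\log_\Delta n\le 15$), because the bandwidth of any $n$-vertex graph is at most $n$. In the remaining range $\log_\Delta n>15$ one has $\tfrac12\log_\Delta n-\tfrac32>\tfrac25\log_\Delta n$, so the estimate $\bdw(G)\le 6n/(\tfrac25\log_\Delta n)=15n/\log_\Delta n$ goes through. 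Thus the proof is essentially a one-line invocation of the two cited results followed by this elementary case split on the magnitude of $\log_\Delta n$.
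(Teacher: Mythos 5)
Your proof is correct and takes exactly the route the paper intends: the paper obtains Corollary~\ref{cor:planar} by combining the Lipton--Tarjan separator theorem (using that planarity is closed under subgraphs, so $\s(G)\le 2\sqrt{2n}$) with Theorem~\ref{thm:sep-band}, leaving the constant bookkeeping implicit behind the word ``clearly''. Your case split on $\log_\Delta n\le 15$ versus $\log_\Delta n>15$ correctly supplies that arithmetic, so the argument is complete.
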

%
It is easy to see that the bound in Corollary~\ref{cor:planar} is sharp up to
the multiplicative constant -- since the bandwidth of any graph $G$ is bounded from below by
$(n-1)/\mbox{diam}(G)$, it suffices to consider for example the complete binary tree on $n$ vertices.
Corollary~\ref{cor:planar} is used in~\cite{CDMW} to infer a result about the
geometric realisability of planar graphs $G=(V,E)$ with $|V|=n$ and $\Delta(G)\le\Delta$.

This motivates why we want to consider some generalisations of the planar
separator theorem in the following. The first such result is due to Gilbert,
Hutchinson, and Tarjan~\cite{GilHutTar} and deals with graphs of arbitrary
genus.
 \footnote{Again, the separator theorems we refer to bound the size of a
   separator in $G$. Since the class of graphs with genus less than $g$
   (or, respectively, of $H$-minor free graphs) is closed under taking subgraphs however, this
   theorem can also be applied to such subgraphs and thus the bound on~$\s(G)$
   follows.}

\begin{theorem}[\cite{GilHutTar}]
\label{thm:sep-genus}
  An $n$-vertex graph $G$ with genus $g \ge 0$ has separation
  number $\s(G) \le6\sqrt{gn}+2\sqrt{2n}$.
\end{theorem}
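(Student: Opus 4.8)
Since the class of graphs of genus at most~$g$ is closed under taking subgraphs (as noted in the footnote), it suffices to prove the single-separator statement that every graph~$H$ of genus at most~$g$ on~$m$ vertices admits a $(6\sqrt{gm}+2\sqrt{2m},2/3)$-separator. Indeed, applying this to each subgraph~$G'$ of~$G$, which has $m\le n$ vertices and genus at most~$g$, and using that the bound $6\sqrt{gm}+2\sqrt{2m}$ is increasing in~$m$, yields $\s(G)\le 6\sqrt{gn}+2\sqrt{2n}$. So the plan is to establish this single-separator statement, which I would do by induction on the genus~$g$.

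The base case $g=0$ is precisely the Lipton--Tarjan planar separator theorem~\cite{LipTar}, which provides a $(2\sqrt{2m},2/3)$-separator and accounts for the summand $2\sqrt{2m}$. For the inductive step with $g\ge 1$, the idea is to reduce the genus by one. Fixing a $2$-cell embedding of~$H$ in the orientable surface of genus~$g$, I would locate a short \emph{noncontractible} cycle~$C$, i.e.\ one not bounding a disk, and cut the surface open along~$C$. This produces an embedding of a modified graph~$H'$ in a surface of genus at most~$g-1$ (after capping the new boundary with disks) on at most $m+|C|$ vertices, since the vertices of~$C$ may be duplicated by the cut. The induction hypothesis gives a balanced separator~$S'$ of~$H'$, and then $S\deff S'\cup V(C)$ separates~$H$ after identifying the duplicated copies, because every edge of~$H$ crossing the cut has an endpoint in~$V(C)$. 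Choosing each of the~$g$ successive cutting cycles to have length $|C|=\bigO(\sqrt{m/g})$ makes the telescoping contribution of the $V(C)$-terms of order $g\cdot\sqrt{m/g}=\sqrt{gm}$, matching the summand $6\sqrt{gm}$, while the balance constant $2/3$ is preserved since each cut only adjoins a small vertex set and never merges large components.

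The heart of the argument, and the step I expect to be the main obstacle, is producing the short noncontractible cycle with a length bound of the correct order $\sqrt{m/g}$. The natural route is to run a breadth-first search from an arbitrary root and examine the BFS levels $L_0,L_1,\dots$, whose sizes sum to~$m$. If some level is already small enough to act directly as a balanced separator, one is done without touching the genus; otherwise all relevant levels are large, which forces the embedding to be ``crowded'' and guarantees a noncontractible cycle formed from two short BFS tree-paths together with a single non-tree edge. Bounding the length of this cycle by $\bigO(\sqrt{m/g})$ and, simultaneously, verifying that it really is noncontractible so that cutting genuinely lowers the genus is the delicate part: it is exactly here that the topology of the surface, via Euler's formula and the handle structure, must be combined with the purely combinatorial level decomposition. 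Once this short-cycle lemma is available, the induction and the tracking of the constants are routine.
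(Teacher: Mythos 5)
The first thing to note is that the paper does not prove Theorem~\ref{thm:sep-genus} at all: it is quoted as a black box from Gilbert, Hutchinson, and Tarjan~\cite{GilHutTar}, and the only argument the paper supplies is the footnote observation that the class of graphs of genus at most~$g$ is closed under taking subgraphs, so that a single-separator theorem for this class bounds the separation number $\s(G)$ as defined here (a quantity that must control \emph{all} subgraphs). Your opening paragraph reproduces exactly this reduction, correctly using monotonicity of $6\sqrt{gm}+2\sqrt{2m}$ in~$m$; that part is fine and matches what the paper actually does.

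The rest, however, is a proof outline with a genuine gap at its core, as you yourself concede. The entire content of the Gilbert--Hutchinson--Tarjan theorem lies in the step you defer: producing, in an embedded graph whose relevant BFS levels are all large, a cycle of length $\bigO(\sqrt{m/g})$ that is certifiably \emph{noncontractible}, and doing this $g$ times with controlled total cost. Nothing in your sketch indicates how noncontractibility is certified or how Euler's formula enters, so the argument cannot be checked or completed as written. Two further points would fail or need repair even granting that lemma. First, cutting along a noncontractible cycle lowers the genus only when the cycle is \emph{nonseparating}; on a surface of genus $g\ge 2$ a noncontractible cycle may separate the surface into two pieces of positive genera with $g_1+g_2=g$, in which case your ``reduce $g$ by one'' induction does not apply and must be restructured (e.g., by insisting on nonseparating cycles or by inducting over both pieces). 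Second, the balance claim is not automatic: the inductive separator $S'$ is balanced with respect to the $m+|C|$ vertices of the cut-open graph $H'$, so after identification the parts of $H$ are only bounded by $\tfrac23(m+|C|)$ rather than $\tfrac23 m$; this loss has to be absorbed somewhere, and it is precisely in this bookkeeping that the explicit constants $6$ and $2\sqrt{2}$ arise, which your sketch does not track. In short: right strategy (it is essentially the one in~\cite{GilHutTar}, with base case~\cite{LipTar}), correct first reduction, but the short-noncontractible-cycle lemma and the topological bookkeeping that constitute the actual theorem are missing.
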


For fixed $g$ the class of all graphs with genus at most $g$ is closed under
taking minors. Here $H$ is a minor of $G$ if it can be obtained from a
subgraph of~$G$ by a sequence of edge deletions and contractions. A graph $G$
is called $H$-minor free if $H$ is no minor of $G$. The famous graph minor theorem
by Robertson and Seymour~\cite{RobSeyXX} states that any minor closed class
of graphs can be characterised by a finite set of forbidden minors (such as
$K_{3,3}$ and $K_5$ in the case of planar graphs). The next separator
theorem by Alon, Seymour, and Thomas~\cite{AloSeyTho} shows that already
forbidding one minor enforces a small separator.

\begin{theorem}[\cite{AloSeyTho}]
\label{lem:sep-minor}
  Let $H$ be an arbitrary graph. Then any $n$-vertex graph $G$ that is 
  $H$-minor free has separation number $\s(G) \le |H|^{3/2}\sqrt{n}$.
\end{theorem}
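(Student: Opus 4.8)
The plan is to reduce this to the corresponding statement for complete-graph minors, which is where the real work lies. Write $h:=|H|$. Since $H$ is (isomorphic to) a subgraph of $K_h$, a minor model of $K_h$ in a graph is automatically a minor model of $H$ (the non-edges of $H$ impose no constraint); hence every $H$-minor-free graph is in particular $K_h$-minor-free. Moreover the class of $K_h$-minor-free graphs is closed under taking subgraphs. So it suffices to prove the single-graph statement: \emph{every $K_h$-minor-free graph $G'$ on $m$ vertices has an $(h^{3/2}\sqrt m,\,2/3)$-separator.} Applying this to each subgraph $G'\subseteq G$ (which has $m\le n$ vertices and is again $K_h$-minor-free) produces a separator of size at most $h^{3/2}\sqrt m\le h^{3/2}\sqrt n$, and therefore $\s(G)\le h^{3/2}\sqrt n$, as required.

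For the single-graph statement I would argue by contraposition: assuming $G'$ has no balanced separator of the claimed size, I would build a $K_h$ minor, contradicting $K_h$-minor-freeness. Recall that a $K_h$ minor is a family of $h$ pairwise disjoint, connected, pairwise adjacent vertex sets (branch sets). The construction is greedy: maintain a partial clique minor of $j<h$ branch sets together with a large ``reservoir'' of unused vertices, and repeatedly add one more branch set, keeping the invariant that the branch sets and the paths used to join them occupy only a controlled number of vertices so that the reservoir stays large enough to continue. It is worth stressing here \emph{why} minor-freeness, rather than mere sparsity, must be used: bounded-degree expanders are sparse yet have no sublinear separator at all, so an edge-count bound such as the Kostochka--Thomason estimate $e(G')=O(h\sqrt{\log h}\,m)$ cannot by itself suffice. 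What rescues the argument is that a large expanding subgraph would itself contain a large clique minor, so the $K_h$-minor-free hypothesis genuinely forbids the expanding pieces that obstruct separators --- exactly the dichotomy underlying Theorem~\ref{thm:main}.

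The heart of the argument, and the step I expect to be the main obstacle, is the \emph{extension lemma}: as long as $j<h$ and the reservoir is large, the absence of a small balanced separator must force enough connectivity (many vertex-disjoint paths, quantified via Menger's theorem) to attach a fresh connected seed in the reservoir to all $j$ existing branch sets at once. The truly delicate part is the bookkeeping that turns this qualitative ``high connectivity'' into the precise bound: one must balance the number $h$ of branch sets against the order of the connectivity needed to keep them pairwise joined, in such a way that the whole construction consumes fewer than $m$ vertices and never stalls before reaching $j=h$. The exponent $3/2$ in $h^{3/2}\sqrt m$ is precisely the output of this balance, and squeezing the leading constant down to $1$ requires choosing the reservoir thresholds, the seed sizes, and the path lengths with care; this quantitative optimisation is what makes the result nontrivial.

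If the disjoint-path routing in the extension lemma turns out to be too hard to control directly, I would fall back on a conceptually simpler but quantitatively weaker route that loses a $\sqrt{\log m}$ factor: grow balls along the breadth-first-search layers from a root, observe that in a $K_h$-minor-free graph some layer (or some annulus of layers) must be sparse, and cut there, recursing on the two sides. This region-growing argument is more transparent and robust, but it yields only a separator that is larger by a logarithmic factor, so I would use it only as a safety net and invest the main effort in the clique-minor construction above to obtain the clean $h^{3/2}\sqrt n$ bound stated in the theorem.
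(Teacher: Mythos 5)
The first thing to observe is that the paper does not prove this statement at all: it is quoted verbatim from Alon, Seymour, and Thomas \cite{AloSeyTho}, and the only argument the paper supplies is the footnote remark that, since the class of $H$-minor-free graphs is closed under taking subgraphs, a separator theorem for a single graph automatically bounds the separation number $\s(G)$ (which is defined via all subgraphs). Your outer layer reproduces exactly this reasoning, and it is correct: any $K_h$-minor model yields an $H$-minor model (the non-edges of $H$ impose no constraint), so $H$-minor-freeness implies $K_h$-minor-freeness, and applying a single-graph separator theorem to every subgraph gives the bound on $\s(G)$.

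The genuine gap is that the core of your proposal --- the single-graph statement that every $K_h$-minor-free graph on $m$ vertices has an $\bigl(h^{3/2}\sqrt{m},\,2/3\bigr)$-separator --- is precisely the content of the Alon--Seymour--Thomas theorem, and you do not prove it. Your sketch (contraposition, greedy growth of branch sets, Menger's theorem, an ``extension lemma'') has the right general shape, but you explicitly leave the extension lemma and all of its quantitative bookkeeping --- the part that produces the exponent $3/2$ and the leading constant $1$ --- as an acknowledged obstacle rather than a completed argument. The fallback region-growing argument you offer loses a $\sqrt{\log m}$ factor, so it cannot establish the stated inequality either. As written, the proposal is a correct reduction wrapped around an unproven core; to close it you would need to actually carry out the clique-minor construction with the stated constants, or do what the paper does and cite \cite{AloSeyTho} for the single-graph separator theorem, reserving your own argument only for the (correct) reduction steps.
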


We can apply these theorems to draw the following conclusion concerning the
bandwidth of bounded-degree graphs with fixed genus or some fixed forbidden
minor from Theorem~\ref{thm:sep-band}.

\begin{corollary}\label{cor:sep}
  Let $g$ be a positive integer, $\Delta\ge2$ and $H$ be
  an $h$-vertex graph and $G$ an $n$-vertex graph with maximum degree 
  $\Delta(G)\le\Delta$.
  \begin{enumerate}[label=\abc,leftmargin=*,itemsep=0mm,parsep=0mm,topsep=2mm]
    \item If $G$ has genus $g$ then $\bdw(G)\le 15n/\log_{\Delta}(n/g)$.
    \item If $G$ is $H$-minor free then $\bdw(G)\le 12n/\log_{\Delta}(n/h^3)$.
  \end{enumerate}
\end{corollary}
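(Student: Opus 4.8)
The plan is to combine the separator theorems (Theorem~\ref{thm:sep-genus} for part~(a) and Theorem~\ref{lem:sep-minor} for part~(b)) with the separator-to-bandwidth bound of Theorem~\ref{thm:sep-band}. In both cases the strategy is identical: substitute the known upper bound on $\s(G)$ into
\[
  \bdw(G)\le\frac{6n}{\log_{\Delta}(n/\s(G))}
\]
and simplify the denominator. The only nontrivial work is bounding $\log_{\Delta}(n/\s(G))$ from below by a clean expression of the form $\log_{\Delta}(n/g)$ or $\log_{\Delta}(n/h^3)$, up to the multiplicative slack hidden in the constants $15$ and $12$.

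For part~(a), I would start from $\s(G)\le 6\sqrt{gn}+2\sqrt{2n}\le (6+2\sqrt2)\sqrt{gn}$ (using $g\ge1$). Then $n/\s(G)\ge \tfrac{1}{6+2\sqrt2}\sqrt{n/g}$, so
\[
  \log_{\Delta}\!\frac{n}{\s(G)}
  \ge \tfrac12\log_{\Delta}\!\frac{n}{g}-\log_{\Delta}(6+2\sqrt2).
\]
For $n/g$ large the second term is negligible against the first, and one checks that $6/\bigl(\tfrac12\log_{\Delta}(n/g)-c\bigr)\le 15/\log_{\Delta}(n/g)$ for the relevant range (the factor $2$ from the square root turns the $6$ into $12$, and the additive constant is absorbed by enlarging it to $15$). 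For part~(b) the computation is parallel but cleaner: $\s(G)\le h^{3/2}\sqrt n$ gives $n/\s(G)\ge \sqrt{n/h^3}$, hence $\log_{\Delta}(n/\s(G))\ge \tfrac12\log_{\Delta}(n/h^3)$, and $6/\bigl(\tfrac12\log_{\Delta}(n/h^3)\bigr)=12/\log_{\Delta}(n/h^3)$ directly, with no additive error term to absorb.

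I expect the main (though still minor) obstacle to be the genus case, where the additive constant $\log_{\Delta}(6+2\sqrt2)$ must be controlled to justify replacing the honest factor $12$ by the stated $15$; this requires either assuming $n/g$ is large enough that the constant is dominated, or arguing that for small $n/g$ the bound is trivial because the right-hand side exceeds $n$. In the $H$-minor-free case no such slack is needed, which is presumably why the constant there is the tighter $12$. Throughout I would also silently invoke the footnoted observation that the graph classes in question are closed under taking subgraphs, so that the separator theorems bound $\s(G)$ and not merely the separation number of $G$ itself.
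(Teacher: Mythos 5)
Your strategy is exactly the one the paper intends (the paper states Corollary~\ref{cor:sep} with no written proof, as an immediate application of Theorems~\ref{thm:sep-genus} and~\ref{lem:sep-minor} to Theorem~\ref{thm:sep-band}), and your part~(b) is complete as written. The problem is in part~(a): the step ``one checks that $6/(\tfrac12\log_\Delta(n/g)-c)\le 15/\log_\Delta(n/g)$ for the relevant range'' fails for small $\Delta$, and your two escape regimes do not close the hole. Writing $x=\log_\Delta(n/g)$ and $c=\log_\Delta(6+2\sqrt2)$, that inequality is equivalent to $x\ge 10c$, i.e.\ to $n/g\ge(6+2\sqrt2)^{10}\approx 2.9\cdot 10^{9}$ --- a condition \emph{independent of $\Delta$} --- while the triviality argument $15n/x\ge n-1$ covers only $x\le 15$ (up to lower-order terms), i.e.\ $n/g\le\Delta^{15}$. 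The two regimes cover all values of $n/g$ if and only if $\Delta^{15}\ge(6+2\sqrt2)^{10}$, that is, iff $\Delta\ge(6+2\sqrt2)^{2/3}\approx 4.27$. So for $\Delta\in\{2,3,4\}$ there is an uncovered window $\Delta^{15}<n/g<(6+2\sqrt2)^{10}$; concretely, for $\Delta=3$ and $x=16$ your derivation gives only $\bdw(G)\le 6n/\bigl(8-\log_3(6+2\sqrt2)\bigr)\approx 0.997\,n$, which is weaker than the claimed $15n/16=0.9375\,n$, and the trivial bound $n-1$ is weaker still.

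The hole is fixable, but it needs one idea beyond pure substitution. The case $\Delta=2$ is harmless for a different reason (graphs of maximum degree~$2$ have $\bdw(G)\le 2$). For $\Delta\in\{3,4\}$ and $n/g$ in the window, note that $c<2$ there, so $x<10c<20$ and hence the claimed bound satisfies $15n/x>\tfrac34 n$; on the other hand, a \emph{single} application of an $(\s(G),2/3)$-separator $S$ of $G$ (order the vertices of $A$ first, then $S$, then $B$; no edge joins $A$ to $B$) gives $\bdw(G)\le\max\{|A|,|B|\}+|S|\le\tfrac23 n+\s(G)$, and since $n/g>\Delta^{15}$ one has $\s(G)\le(6+2\sqrt2)\sqrt{gn}<(6+2\sqrt2)\Delta^{-15/2}n<\tfrac{1}{12}n$, whence $\bdw(G)<\tfrac34 n<15n/x$. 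With this (or any equivalent patch for $\Delta\in\{3,4\}$) your proof of part~(a) goes through; without it, the proposal does not establish the stated constant~$15$ for all admissible $\Delta$, $n$, and $g$.
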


\subsection{Embedding problems and universality}\label{subsec:universal}

A graph $H$ that contains copies of all graphs $G\in\cG$ for some class of
graphs $\cG$ is called \emph{universal for~$\cG$}.
The construction of sparse universal graphs for certain families $\cG$ has
applications in VLSI circuit design and was extensively studied (see,
e.g., \cite{AloCap} and the references therein).
In contrast to these results our focus is not on
minimising the number of edges of $H$,
but instead we are interested in giving a relatively 
\emph{simple} criterion for
universality for $\cG$ that is satisfied by \emph{many} graphs~$H$ of the
same order as the largest graph in $\cG$.

The setting with which we are concerned here are embedding results that
guarantee that a bounded-degree graph $G$
can be embedded into a graph $H$ with sufficiently high minimum
degree, even when $G$ and $H$ have the same number of vertices. 
Dirac's theorem~\cite{Dir} concerning the existence of Hamiltonian cycles in 
graphs of minimum degree $n/2$ is a classical example for theorems of this type.
It was followed by results of Corr\'adi and
Hajnal~\cite{CorHaj}, Hajnal and Szemer\'edi~\cite{HajSze} about embedding
$K_r$-factors, and more recently by a series of theorems due to Koml\'os, Sark\"ozy, and
Szemer\'edi and others
which deal with powers of Hamiltonian cycles, trees, and $H$-factors (see,
e.g., the survey~\cite{KOSurvey}). Along the lines of these results the
following unifying conjecture was made by Bollob\'as and Koml\'os~\cite{Kom99}
and recently proved by B\"ottcher, Schacht, and Taraz~\cite{BST09}.

\begin{theorem}[\cite{BST09}]
  \label{thm:bolkom}
  For all $r,\Delta\in\mathbb{N}$ and $\gamma>0$, there exist constants $\beta>0$ and
  $n_0\in\mathbb{N}$ such that for every $n\geq n_0$ the following holds.
  If~$G$ is an $r$-chromatic graph on~$n$ vertices with $\Delta(G) \leq \Delta$ and
  bandwidth at most $\beta n$ and if~$H$ is a graph on~$n$ vertices with minimum degree
  $\delta(H) \geq (\frac{r-1}{r}+\gamma)n$, then $G$ can be embedded into $H$.
\end{theorem}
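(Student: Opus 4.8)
The plan is to prove this via the regularity method, combining a structural analysis of the host graph $H$ with a structural analysis of the bounded-bandwidth graph $G$, and then joining the two through a spanning version of the blow-up lemma. The three ingredients are a ``lemma for $H$'' that extracts the right skeleton from the regular partition, a ``lemma for $G$'' that cuts $G$ to fit that skeleton, and a general embedding result that fills in the details.

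First I would apply Szemer\'edi's regularity lemma to $H$ to obtain an $\eps$-regular partition $V(H)=V_0\dcup V_1\dcup\dots\dcup V_k$ into clusters of equal size together with a small exceptional set $V_0$. Passing to the reduced graph $R$ on $[k]$, where $ij\in E(R)$ whenever $(V_i,V_j)$ is $\eps$-regular of density above a fixed threshold, the degree condition $\delta(H)\ge(\frac{r-1}{r}+\gamma)n$ transfers to $\delta(R)\ge(\frac{r-1}{r}+\frac{\gamma}{2})k$. By the Hajnal--Szemer\'edi theorem this minimum degree already forces a $K_r$-factor in $R$; using the remaining $\frac{\gamma}{2}$ degree slack I would link these cliques consecutively into a spanning ``backbone'' of $K_r$'s sharing clusters along a path-like arrangement, which is exactly the structure capable of hosting an $r$-chromatic graph of small bandwidth.

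Next I would analyse $G$ itself. Since $\bdw(G)\le\beta n$ and $\chi(G)\le r$, I would take the bandwidth ordering together with a proper $r$-colouring and cut $V(G)$ into consecutive blocks, assigning each block to one clique of the backbone and the $r$ colour classes within a block to the $r$ clusters of that clique. The point of the bandwidth bound is that every edge of $G$ then joins vertices in the same block or in adjacent blocks, so under this assignment each edge of $G$ is routed to a pair of clusters that is regular in $R$. A balancing argument is then needed so that the number of vertices of $G$ sent to each cluster matches the cluster size \emph{exactly}, which the small bandwidth and the freedom to shift block boundaries make possible.

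The final step is to embed $G$ cluster by cluster with the blow-up lemma of Koml\'os, S\'ark\"ozy, and Szemer\'edi, after converting the relevant regular pairs into super-regular pairs. The hard part will be making the embedding genuinely \emph{spanning}: the regularity lemma discards the exceptional set $V_0$, and super-regularisation removes further vertices, yet the conclusion demands that every vertex of $H$ carry an image. Overcoming this requires an image-restriction mechanism that reserves, for each exceptional or left-over vertex of $H$, a suitable pre-image among the low-degree vertices of $G$ (using that such vertices are plentiful in a bounded-degree, bounded-bandwidth graph), carried out while preserving super-regularity. Reconciling these reservations with the exact cluster-size balancing, so that neither $G$ nor $H$ has vertices left unmatched, is the principal technical obstacle.
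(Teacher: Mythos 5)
This theorem is not proved in the paper at all: it is the bandwidth theorem of B\"ottcher, Schacht, and Taraz, imported from \cite{BST09} as a black box (the present paper only \emph{applies} it, via its sublinear-bandwidth criteria), so there is no internal proof to compare against. Measured against the actual proof in \cite{BST09}, your outline is essentially that proof's strategy: Szemer\'edi's regularity lemma applied to $H$, the Hajnal--Szemer\'edi theorem giving a $K_r$-factor in the reduced graph which is then linked into a path-like backbone (their ``Lemma for $H$''), a cutting of $G$ along the bandwidth ordering compatible with an $r$-colouring (their ``Lemma for $G$''), and a spanning embedding via the blow-up lemma with image restrictions to absorb exceptional vertices. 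What your sketch leaves open is precisely where the bulk of the work in \cite{BST09} lies: you flag the exact balancing of cluster sizes and the pre-embedding of exceptional vertices as ``the principal technical obstacle'' without resolving them, and you do not address the colour-phase problem at the seams --- an edge of $G$ crossing from one block to the next must land on a regular pair, which forces the colour-class-to-cluster assignments of consecutive backbone cliques to be compatible; \cite{BST09} handles this by inserting connecting sequences of $r$-cliques, each sharing $r-1$ clusters with the next, so that the colour classes can be permuted gradually, and by exploiting the small bandwidth to confine these transition regions to negligibly few vertices of $G$. So your plan is the right plan (indeed, the known one), but as submitted it is a statement of the known difficulties rather than a proof.
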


The proof of Theorem~\ref{thm:bolkom} heavily uses the bandwidth constraint
insofar as it constructs the required embedding sequentially, following the
ordering given by the vertex labels of $G$. Here it is of course beneficial that
the neighbourhood of every vertex $v$ in $G$ is confined to the $\beta n$
vertices which immediately precede or follow $v$.

Also, it is not difficult to see that the statement in Theorem~\ref{thm:bolkom}
becomes false without the constraint on the bandwidth: Consider $r=2$, let $G$ be
a random bipartite graph with bounded maximum degree and let $H$ be the graph
formed by two cliques of size $(1/2+\gamma)n$ each, which share exactly $2\gamma
n$ vertices. Then $H$ cannot contain a copy of $G$, since in $G$ every vertex set
of size $(1/2-\gamma)n$ has more than $2\gamma n$ neighbours. The reason for this
obstruction is again that $G$ has good expansion properties.

On the other hand, Theorem~\ref{thm:main} states that in bounded degree graphs,
the existence of a big expanding subgraph is in fact the only obstacle which can
prevent sublinear bandwidth and thus the only possible obstruction for a
universality result as in Theorem~\ref{thm:bolkom}. More precisely we immediately
get the following corollary from Theorem~\ref{thm:main}.

\begin{corollary}
  If the class $\cC$ meets one (and thus all) of the conditions in
  Theorem~\ref{thm:main}, then the following is also true.
  For every $\gamma>0$ and $r\in\N$ there exists $n_0$ such that for all $n\ge
  n_0$ and for every graph $G\in\cC_n$ with chromatic number $r$ and for every 
  graph $H$ on $n$ vertices with minimum degree at least
  $(\frac{r-1}{r}+\gamma)n$, the graph $H$ contains a copy of $G$. 
\end{corollary}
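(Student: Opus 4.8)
The plan is to deduce this corollary directly from Theorem~\ref{thm:main} together with the Bollob\'as--Koml\'os theorem (Theorem~\ref{thm:bolkom}), treating it as a two-line bookkeeping argument rather than a fresh combinatorial proof. The key observation is that Theorem~\ref{thm:bolkom} supplies a universality statement whose only hypothesis on $G$ (beyond bounded degree and chromatic number) is a bandwidth bound $\bdw(G)\le\beta n$, where $\beta=\beta(r,\Delta,\gamma)$ is furnished by that theorem. So the entire task reduces to producing, for every sufficiently large $G\in\cC_n$, exactly this bandwidth bound; and that is precisely what property~\ref{it:main:2} of Theorem~\ref{thm:main} guarantees once we know $\cC$ satisfies one of its equivalent conditions.

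Concretely, I would first fix $\gamma>0$ and $r\in\N$, and invoke Theorem~\ref{thm:bolkom} with these together with the degree bound $\Delta$ that is built into the class $\cC$. This yields the two constants $\beta>0$ and $n_0'\in\N$ such that any $r$-chromatic $n$-vertex graph of maximum degree at most $\Delta$ and bandwidth at most $\beta n$ embeds into any $n$-vertex host of minimum degree at least $(\tfrac{r-1}{r}+\gamma)n$, provided $n\ge n_0'$. Next, since by hypothesis $\cC$ meets one of the conditions of Theorem~\ref{thm:main}, it meets all of them, and in particular property~\ref{it:main:2}: applying it with $\beta_2\deff\beta$ produces a threshold $n_2$ such that $\bdw(G)\le\beta n$ for every $G\in\cC_n$ with $n\ge n_2$. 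Setting $n_0\deff\max\{n_0',n_2\}$ then closes the argument, since for $n\ge n_0$ every $G\in\cC_n$ of chromatic number $r$ simultaneously has maximum degree at most $\Delta$ (membership in $\cC$), bandwidth at most $\beta n$ (by property~\ref{it:main:2}), and is $r$-chromatic, so Theorem~\ref{thm:bolkom} delivers the desired copy of $G$ in $H$.

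There is essentially no hard combinatorial obstacle here; the only point requiring a moment's care is the \emph{order of quantifiers}, namely that the bandwidth constant $\beta$ must be chosen \emph{before} we apply Theorem~\ref{thm:main}, because $\beta$ depends on $r,\Delta,\gamma$ but not on $n$, and Theorem~\ref{thm:main}'s property~\ref{it:main:2} accepts an arbitrary prescribed $\beta_2$ and returns a threshold. Thus feeding $\beta_2=\beta$ into the theorem is legitimate and produces a single uniform $n_0$ valid for the whole class $\cC$, which is exactly the uniformity the corollary asserts. I would also note in passing that the chromatic number $r$ of $G$ and the degree bound $\Delta$ are constants independent of $n$, so no subtlety arises from their interaction with the limit $n\to\infty$.

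In short, the proof is a direct concatenation: \emph{bounded-degree class with a sublinear parameter} $\Rightarrow$ (via Theorem~\ref{thm:main}) \emph{sublinear bandwidth} $\Rightarrow$ (via Theorem~\ref{thm:bolkom}) \emph{universality at minimum degree $(\tfrac{r-1}{r}+\gamma)n$}. The substance of the corollary lives entirely in the two cited theorems, and our contribution is merely to verify that their hypotheses chain together with matching constants.
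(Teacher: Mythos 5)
Your proof is correct and is exactly the argument the paper intends when it says the corollary follows ``immediately'' from Theorem~\ref{thm:main}: invoke Theorem~\ref{thm:bolkom} to get $\beta$, feed $\beta_2=\beta$ into property~\ref{it:main:2}, and take the maximum of the two thresholds. Your explicit attention to the quantifier order is a sound elaboration of the same route, not a deviation from it.
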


By Corollary~\ref{cor:planar} we infer as a special case that all sufficiently
large graphs with minimum degree $(\frac34+\gamma)n$ are universal for the class
of bounded-degree planar graphs. Universal graphs for bounded degree planar
graphs have also been studied in~\cite{BCLR,Cap}.

\begin{corollary}\label{cor:planar-univers}
  For all $\Delta\in\mathbb{N}$ and $\gamma>0$, there exists $n_0\in\mathbb{N}$
  such that for every $n\geq n_0$ the following holds:
  \begin{enumerate}[label=\abc,leftmargin=*,itemsep=0mm,parsep=0mm,topsep=2mm]
  \item
    Every $3$-chromatic planar graph on $n$ vertices with maximum degree at most
    $\Delta$ can be embedded into every graph on $n$ vertices with minimum degree at
    least $(\frac23+\gamma)n$.
  \item
    Every planar graph on $n$ vertices with maximum degree at most $\Delta$ can be
    embedded into every graph on $n$ vertices with minimum degree at least
    $(\frac34+\gamma)n$.
  \end{enumerate}
\end{corollary}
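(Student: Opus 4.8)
The plan is to derive both statements directly from the Bollob\'as--Koml\'os embedding theorem (Theorem~\ref{thm:bolkom}), using Corollary~\ref{cor:planar} to supply the bandwidth hypothesis that theorem requires. Recall that Theorem~\ref{thm:bolkom} only asks for \emph{sublinear} bandwidth, more precisely $\bdw(G)\le\beta n$ for a constant $\beta=\beta(r,\Delta,\gamma)$ that it produces, whereas Corollary~\ref{cor:planar} tells us that a planar $G$ with $\Delta(G)\le\Delta$ satisfies $\bdw(G)\le 15n/\log_\Delta n$. Since $15/\log_\Delta n\to 0$ as $n\to\infty$, this bandwidth bound will fall below any fixed $\beta$ once $n$ is large, namely as soon as $\log_\Delta n\ge 15/\beta$, i.e.\ $n\ge\Delta^{15/\beta}$. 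Thus the whole argument should reduce to bookkeeping of constants. For $\Delta=1$ one may simply replace $\Delta$ by $\max\{2,\Delta\}$ so that Corollary~\ref{cor:planar} still applies.

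For the first part I would invoke Theorem~\ref{thm:bolkom} with $r=3$ and the given $\Delta,\gamma$ to obtain a constant $\beta>0$ and a threshold $n_0'$, and then set $n_0\deff\max\{n_0',\,\Delta^{15/\beta}\}$. For any $n\ge n_0$ and any $3$-chromatic planar $G$ on $n$ vertices with $\Delta(G)\le\Delta$, Corollary~\ref{cor:planar} gives $\bdw(G)\le 15n/\log_\Delta n\le\beta n$ by the choice of $n_0$. As $\frac{r-1}{r}=\frac23$ for $r=3$, any host $H$ with $\delta(H)\ge(\frac23+\gamma)n=(\frac{r-1}{r}+\gamma)n$ meets the hypothesis of Theorem~\ref{thm:bolkom}, which then yields the required embedding of $G$ into $H$.

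For the second part the extra ingredient is the four colour theorem, which guarantees that every planar $G$ is $r$-chromatic for some $r\in\{1,2,3,4\}$. The key observation is that the minimum-degree requirement of Theorem~\ref{thm:bolkom} is monotone in $r$: since $\frac{r-1}{r}\le\frac34$ for all $r\le 4$, a host $H$ with $\delta(H)\ge(\frac34+\gamma)n$ automatically satisfies $\delta(H)\ge(\frac{r-1}{r}+\gamma)n$. I would therefore apply Theorem~\ref{thm:bolkom} once for each $r\in\{1,2,3,4\}$, obtaining constants $\beta_r>0$ and thresholds $n_0^{(r)}$, and then set $\beta\deff\min_{r\le 4}\beta_r$ and $n_0\deff\max\{\max_{r\le 4}n_0^{(r)},\,\Delta^{15/\beta}\}$; taking a minimum and maximum over finitely many values is harmless. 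For $n\ge n_0$ and planar $G$ with chromatic number $r\le 4$, Corollary~\ref{cor:planar} again gives $\bdw(G)\le\beta n\le\beta_r n$, and Theorem~\ref{thm:bolkom} applied with this particular $r$ completes the embedding.

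There is essentially no hard analytic step here, since both heavy results---the embedding theorem and the planar bandwidth bound---are quoted. The only genuine subtleties, and hence the places I would be most careful, are (i) remembering to invoke the four colour theorem to bound $\chi(G)$ in the second part, and (ii) exploiting the monotonicity of the degree threshold in $r$ together with the fact that $\beta$ itself depends on $r$, so that one must pass to the minimal $\beta$ over the finitely many relevant chromatic numbers. Everything else is the routine verification that $15n/\log_\Delta n\le\beta n$ for $n\ge\Delta^{15/\beta}$.
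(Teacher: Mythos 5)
Your proposal is correct and takes essentially the same route as the paper: the paper also obtains this corollary by feeding the sublinear bandwidth bound of Corollary~\ref{cor:planar} into the embedding theorem (Theorem~\ref{thm:bolkom}), with the four colour theorem supplying $\chi(G)\le 4$ and the monotonicity of $\frac{r-1}{r}$ handling part (b). Your explicit bookkeeping of the constants $\beta_r$ and the thresholds $n_0^{(r)}$ is simply a spelled-out version of what the paper leaves implicit.
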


This extends a result by K{\"u}hn, Osthus, and Taraz~\cite{KueOstTar}, who
proved that for every graph $H$ with minimum degree at least $(\frac23+\gamma)n$
\emph{there exists a particular} spanning triangulation $G$ that can be
embedded into $H$. Using Corollary~\ref{cor:sep} it is moreover possible to
formulate corresponding generalisations for graphs of fixed genus and for
$H$-minor free graphs for any fixed~$H$.

\section{Acknowledgement}

The first author would like to thank David Wood for fruitful discussions
in an early stage of this project.
In addition we thank two anonymous referees for their helpful suggestions. 


\bibliographystyle{amsplain} 
\bibliography{bandwidth}

\end{document}